\documentclass[12pt]{amsart}

\usepackage{amsmath}
\usepackage{amsfonts}
\usepackage{amssymb}
\usepackage{amsthm}
\usepackage{hyperref}
\usepackage{enumerate}
\usepackage{cleveref}
\usepackage{mathrsfs}

\newtheorem{theorem}{Theorem}[section]
\newtheorem{lemma}[theorem]{Lemma}
\newtheorem{proposition}[theorem]{Proposition}
\newtheorem{corollary}[theorem]{Corollary}
\newtheorem{definition}[theorem]{Definition}
\newtheorem{remark}[theorem]{Remark}

\Crefname{conjecture}{Conjecture}{Conjectures}

\theoremstyle{plain}

\theoremstyle{plain}


\newcommand{\N}{\mathbb{N}}

\newcommand{\Z}{\mathbb{Z}}
\newcommand{\Q}{\mathbb{Q}}
\newcommand{\R}{\mathbb{R}}
\newcommand{\C}{\mathbb{C}}

\newcommand{\bbH}{\mathbb{H}}
\newcommand{\calO}{\mathcal{O}}
\newcommand{\mymod}{\operatorname{mod}}

\newcommand{\eps}{\varepsilon}

\newcommand{\SL}{\operatorname{SL}}

\newcommand{\calH}{\mathscr{H}}

\newcommand{\M}{\mathcal{M}}

\renewcommand{\Im}{\operatorname{Im}}
\newcommand{\calR}{\mathscr{R}}
\newcommand{\SLZ}{\SL_2(\Z)}
\newcommand{\abcd}{\left(\begin{smallmatrix} a & b \\ c & d \end{smallmatrix}\right)}

\newcommand{\calS}{\mathcal{S}}
\newcommand{\pihol}{\pi_{hol}}
\newcommand{\tr}{\operatorname{trace}}

\numberwithin{equation}{section}

\author{Michael H. Mertens}
\title{Eichler-Selberg Type Identities for Mixed Mock Modular Forms}
\keywords{(mixed) mock modular form, Appell-Lerch sum, holomorphic projection, mock theta function, trace formula}
\subjclass[2000]{11F30, 11F37, 11F12}
\address{Mathematisches Institut der Universit\"at zu K\"oln \\ 
         Weyertal 86-90 \\
         D-50931 K\"oln, Germany}
\email{mmertens@math.uni-koeln.de}
\urladdr{http://www.mi.uni-koeln.de/~mmertens}
\thanks{The author's research is supported by the DFG-Graduiertenkolleg 1269 "Global Structures in Geometry and Analysis".\\
This paper is part of the author's Ph.D. thesis \cite{MertPhD}, written under the supervision of Prof. Dr. K.~Bringmann at the Universit\"at zu K\"oln.}

\begin{document}
\maketitle
\centerline{\bf Abstract}

\noindent
Using holomorphic projection, we work out a parametrization for all relations of products (resp. Rankin-Cohen brackets) of weight $\tfrac 32$ mock modular forms with holomorphic shadow and weight $\tfrac 12$ modular forms in the spirit of the Kronecker-Hurwitz class number relations. In particular we obtain new proofs for several class number relations among which some are classical, others are relatively new. We also obtain similar results for the mock theta functions.

\section{Introduction}
Throughout the last 90 years, a great deal of effort of mathematical research has been spent on the \emph{mock theta functions}, which were introduced by S. Ramanujan in his by now world famous deathbed letter to G. H. Hardy (cf. \cite[pp. 220-224]{BR95}). It was only some years ago when it was finally revealed in work of S. Zwegers \cite{ZwegersDiss}, J. H. Bruinier and J. Funke \cite{BF04}, K. Bringmann and K. Ono \cite{BO06, BO10}, and many others what the actual nature of these mock theta function or more generally \emph{mock modular forms} is: they are holomorphic parts of so called \emph{harmonic weak Maa{\ss} forms} (cf. \Cref{sec:Mock}). Mock modular forms have since then had vast applications in partition theory \cite{BL09, BO10}, theory of Lie superalgebras \cite{BF13}, and mathematical physics, e.g. in quantum black holes \cite{Zag12}, just to name a few.

A very famous example of a mock modular form of weight $\tfrac 32$ is the Hurwitz class number generating function
\[\calH(\tau):=\sum\limits_{n=0}^\infty H(n)q^n,\quad \Im(\tau)>0,\quad q:=e^{2\pi i\tau},\]
where $H(0):=-\tfrac{1}{12}$ and for $n\in\N$, $H(n)$ denotes the Hurwitz class number, i.e. the class number of binary integral quadratic forms of discriminant $-n$, where the class containing $a(x^2+y^2)$ (resp. $a(x^2+xy+y^2)$) is counted with multiplicity $\tfrac 12$ (resp. $\tfrac 13$). For convenience we set $H(n)=0$ for $n\notin\N_0$. 

This function was historically the first fully understood example of a mock modular form, without the terminology having been introduced at the time \cite{HZ76}. In \cite{Mert13}, the author used the above result as well as the properties of so called \emph{Appell-Lerch sums} first extensively studied in this context by S. Zwegers in \cite{ZwegersDiss, Zwegers10} to give a mock modular proof of an infinite family of class number relations for odd numbers $n$: 
\begin{align}
\label{eq:Eichler}
\sum\limits_{s\in\Z} H(n-s^2) +\lambda_1(n)&=\frac{1}{3}\sigma_1(n),\\
\label{eq:Cohen}
\sum\limits_{s\in\Z} \left(4s^2-n\right)  H\left(n-s^2\right)+\lambda_3\left(n\right)&=0
\end{align}
where
\begin{gather}\label{eq:lambda}
\lambda_k(n):=\frac 12\sum\limits_{d\mid n} \min\left(d,\frac{n}{d}\right)^k
\end{gather}
and $\sigma_k(n)$ is the usual $k$th power divisor sum. The relation in \eqref{eq:Eichler} was first found by M. Eichler in \cite{Eichler55}, the one in \eqref{eq:Cohen} and infinitely many more had been conjectured by H. Cohen in \cite{Coh75}. Because of their resemblance to the famous class number relation due to Kronecker \cite{Kro60} and Hurwitz \cite{Hur84,Hur85}
\[\sum\limits_{s\in\Z} H(4n-s^2)-2\lambda_1(n)=2\sigma_1(n)\]
and the ones obtained from the Eichler-Selberg trace formula, we refer to these as \emph{Eichler-Selberg type relations}.

In this article, we establish that the Fourier coefficients of all mock modular forms of weight $\tfrac 32$ with holomorphic shadow fulfill Kronecker-Hurwitz type relations (cf. \Cref{theo:main32}). In \Cref{theo:main12} we prove a similar result for the mock theta functions.

The main ingredients for this are the Theorem of Serre and Stark \cite[Theorem A]{SS77} which states that every modular form of weight $\tfrac 12$ is a linear combination of theta series, and holomorphic projection (see \Cref{sec:HolProj}). 

The paper is organized as follows: In \Cref{sec:Mock} we give a brief account of some important facts about harmonic Maa{\ss} forms and mock modular forms and in \Cref{sec:HolProj} we introduce holomorphic projection and work out its action on Rankin-Cohen brackets of mock modular forms and modular forms of arbitrary (positive) weight in two different ways. The first one follows notes of D. Zagier \cite{ZagNotes} which the author was kindly allowed to use, the second one is a little bit more subtle but involves less computation. The special results for the cases that we are interested in are worked out in \Cref{sec:weight,sec:MockTheta}. As applications to our main result, we reprove the class number relations from \cite{Coh75,Mert13}, from the Eichler-Selberg trace formula, and generalizations of the ones from \cite{BK13} in \Cref{sec:Ex} in a more natural way than the method in \cite{BK13,Mert13}. 
\section{Some Preliminaries}
Several proofs in the present paper consist of calculations involving the Gamma function and hypergeometric series. Let us therefore recall some standard notation and some useful identities.

For the Gamma function we have the following well-known \emph{duplication formula} due to A. M. Legendre (cf. \cite{FL05}, Satz 5.4.), which holds for all $s\in\C\setminus\left(-\tfrac 12\N_0\right)$,
\begin{gather}\label{eq:Legendre}
\Gamma(2s)=\frac{1}{\sqrt{\pi}}2^{2s-1}\Gamma(s)\Gamma\left(s+\frac 12\right).
\end{gather} 
Apart from this, there is also the following formula which is sometimes called the \emph{second functional equation} of the Gamma function or the \emph{reflection formula} and goes back to L. Euler (cf. \cite{FL05}, Satz 5.1., \cite{FB06}, p. 204). For all $s\in\C\setminus\Z$ we have that
\begin{gather}\label{eq:Gamma}
\Gamma(s)\Gamma(1-s)=\frac{\pi}{\sin(\pi s)}.
\end{gather}
Formally, we define the \emph{generalized hypergeometric series} by
\begin{gather}\label{eq:qFp}
{}_pF_q\left(\begin{matrix} a_1,\dots ,a_p \\ b_1,\dots ,b_q\end{matrix} ; x\right):=\sum\limits_{n=0}^\infty \frac{(a_1)_n\cdots (a_p)_n}{(b_1)_n\cdots (b_q)_n}\cdot\frac{x^n}{n!},\qquad b_j\notin-\N_0
\end{gather}
where
\[(a)_n:=\prod\limits_{j=0}^{n-1} (a+j)=\frac{\Gamma(a+n)}{\Gamma(a)}\]
denotes the \emph{Pochhammer symbol}. It is worth pointing out that every (convergent) sum $\sum\limits_{n=0}^\infty c_n$ ($c_n\in\C$) where the quotient $\tfrac{c_{n+1}}{c_n}$ is a rational function in $n$ can be written as a multiple of a hypergeometric series (cf. \cite{AAR}, pp. 61 f).

A useful device to evaluate special hypergeometric series is the \emph{Pfaff-Saalsch\"utz identity} (cf. \cite{AAR}, Theorem 2.2.6).
\begin{gather}\label{eq:Pfaff}
{}_3F_2\left(\begin{matrix} -n,a,b \\ c,1+a+b-c-n \end{matrix};1\right)=\frac{(c-a)_n(c-b)_n}{(c)_n(c-a-b)_n}.
\end{gather}
\section{Harmonic Maa{\ss} Forms and Mock Modular Forms}\label{sec:Mock}
In this section, we briefly recall some basic facts about harmonic Maa{\ss} forms. More detailed information may be found in \cite{Zag12, Ono}.

Let $\bbH$ denote the complex upper half-plane and $f:\bbH\rightarrow\C$ be a smooth function, $\gamma=\abcd\in\SL_2(\Z)$, and $k\in\tfrac 12\Z$. We define the following three operators.
\begin{enumerate}
\item The weight $k$ \emph{slash operator}: 
\[(f|_k\gamma)(\tau):=\begin{cases} (c\tau+d)^{-k}f\left(\frac{a\tau+b}{c\tau+d}\right), &\text{if }k\in\Z \\
                                      \left(\frac{c}{d}\right)\eps_d\left(\sqrt{c\tau+d}\right)^{-2k}f\left(\frac{a\tau+b}{c\tau+d}\right), &\text{if }k\in\tfrac 12+\Z,
                                      \end{cases}\]
For $k\notin\Z$ we assume $\gamma\in\Gamma_0(4)$ and we set $\left(\tfrac mn\right)$ to be the extended Legendre symbol in the sense of \cite{Shi}, $\sqrt{\tau}$ the principal branch of the holomorphic square root (i.e. $-\tfrac\pi 2<\arg(\sqrt{\tau})\leq\tfrac \pi 2$), and
\[\eps_d:=\begin{cases} 1 & \text{, if }d\equiv 1\quad (\mymod 4)\\ i & \text{, if }d\equiv 3\quad (\mymod 4).\end{cases}\]
\item The weight $k$ \emph{hyperbolic Laplacian} (where from now on $\tau=x+iy$, $x\in\R,\,y>0$)
\[\Delta_k:=-y^2\left(\frac{\partial^2}{\partial x^2}+\frac{\partial^2}{\partial y^2}\right)+iky\left(\frac{\partial}{\partial x}+i\frac{\partial}{\partial y}\right),\]                                      
\item The weight $k$ \emph{$\xi$-operator}
\[\xi_{k}:=2iy^k\overline{\frac{\partial}{\partial\overline{\tau}}}.\]
\end{enumerate}
\begin{definition}
A smooth function $f\colon\bbH\rightarrow \C$ is called a \emph{harmonic weak Maa{\ss} form} of weight $k\in\tfrac 12\Z$, level $N\in\N$, and character $\chi$ modulo $N$ (with $4\mid N$ if $k\notin\Z$) if it fulfills the following properties:
\begin{enumerate}
\item $f$ is $\Gamma_0(N)$-equivariant, i.e. $(f|_k\gamma)(\tau)=\chi(d)f(\tau)$ for all $\gamma=\abcd\in\Gamma_0(N)$,
\item $f$ lies in the kernel of the hyperbolic Laplacian, i.e. $\Delta_kf\equiv 0$,
\item $f$ grows at most linearly exponentially approaching the cusps of $\Gamma_0(N)$.
\end{enumerate}
The vector space of harmonic weak Maa{\ss} forms of weight $k$, level $N$, and character $\chi$ is denoted by $\mathcal{H}_k(N,\chi)$.
\end{definition}
Exploiting the fact that a harmonic weak Maa{\ss} form is anihilated by $\Delta_k$, one finds that every harmonic Maa{\ss} form splits in the following way.
\begin{lemma}[\cite{Ono}, Lemma 7.2]\label{prop:split}
Let $f$ be a harmonic weak Maa{\ss} form of weight $k\neq 1$. Then $f$ has a canonical splitting into
\begin{gather}\label{eq:split}
f(\tau)=f^+(\tau)+\frac{(4\pi y)^{1-k}}{k-1}\overline{c_f^-(0)}+f^-(\tau),
\end{gather}
where for some $M\in\Z$ we have the Fourier expansions
\[f^+(\tau)=\sum\limits_{n=M}^\infty c_f^+(n)q^n\]
and 
\[f^-(\tau)=\sum\limits_{n=1}^\infty \overline{c_f^-(n)}n^{k-1}\Gamma(1-k;4\pi ny)q^{-n}.\]
As usually we set $q:=e^{2\pi i\tau}$ and 
\[\Gamma(\alpha;x):=\int\limits_x^\infty t^{\alpha-1}e^{-t} dt\]
denotes the incomplete Gamma function. 
\end{lemma}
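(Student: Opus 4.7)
Since $T=\left(\begin{smallmatrix}1&1\\0&1\end{smallmatrix}\right)\in\Gamma_0(N)$ and every character satisfies $\chi(1)=1$, the form $f$ is periodic in $x$ of period $1$, so it admits a Fourier expansion $f(\tau)=\sum_{n\in\Z}c_n(y)e^{2\pi inx}$. The plan is to substitute this expansion into $\Delta_kf\equiv0$ and, using the completeness of $\{e^{2\pi inx}\}_{n\in\Z}$ in $L^2([0,1])$, separate Fourier modes to obtain, for every $n\in\Z$, the ordinary differential equation
\[y^2c_n''(y)+ky\,c_n'(y)+\bigl(2\pi nky-4\pi^2n^2y^2\bigr)c_n(y)=0.\]

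I would then solve this ODE in two cases. For $n=0$ it collapses to $yc_0''+kc_0'=0$, whose general solution is $c_0(y)=\alpha+\beta y^{1-k}$ (here $k\neq1$ is crucial); choosing $\beta=\tfrac{(4\pi)^{1-k}}{k-1}\overline{c_f^-(0)}$ recovers the middle summand of \eqref{eq:split}. For $n\neq0$ the substitution $c_n(y)=e^{-2\pi ny}v_n(y)$ transforms the ODE into $yv_n''+(k-4\pi ny)v_n'=0$, a first-order linear equation in $v_n'$ with solution $v_n'(y)=Cy^{-k}e^{4\pi ny}$. Consequently, the two-dimensional solution space is spanned by the \emph{holomorphic} solution $c_n(y)=e^{-2\pi ny}$ and a second, non-holomorphic solution $c_n(y)=e^{-2\pi ny}\int_y^\infty t^{-k}e^{4\pi nt}\,\d t$, where the improper integral converges precisely when $n<0$. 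For such $n<0$, the substitution $s=4\pi|n|t$ identifies the second solution, up to a multiplicative constant depending on $n$ and $k$, with $|n|^{k-1}\Gamma(1-k;4\pi|n|y)e^{-2\pi ny}$.

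Finally I would invoke the growth condition. For $n>0$ the second solution has asymptotics $\sim(4\pi n)^{-1}y^{-k}e^{2\pi ny}$ as $y\to\infty$; combining the linear exponential bound at $\infty$ with the analogous bounds at every other cusp of $\Gamma_0(N)$ forces its coefficient to vanish, so only the holomorphic contribution $c_f^+(n)q^n$ survives in the mode $n>0$. For $n\leq0$ the holomorphic modes $q^n$ with $n\geq M$ (for some $M\in\Z$ determined by the growth condition) give the remaining summands of $f^+$, while the non-holomorphic modes for $n<0$ assemble, after reindexing $n\mapsto-n$ and absorbing the solution constants into a conjugated coefficient $\overline{c_f^-(n)}$, into the series $f^-(\tau)=\sum_{n=1}^\infty\overline{c_f^-(n)}n^{k-1}\Gamma(1-k;4\pi ny)q^{-n}$. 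The central and only nontrivial step is solving the $n\neq0$ ODE and recognising the decaying branch as an incomplete Gamma function; the rest amounts to separation of variables and bookkeeping of normalisations.
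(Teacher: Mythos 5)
The paper does not prove this lemma at all --- it is quoted verbatim from Ono's survey --- so the only question is whether your argument stands on its own. Most of it does, and it is the standard argument: periodicity gives the Fourier expansion in $x$, separating modes in $\Delta_k f\equiv 0$ gives exactly the ODE you write, the $n=0$ case gives $\alpha+\beta y^{1-k}$ (correctly using $k\neq 1$), and the substitution $c_n=e^{-2\pi ny}v_n$ together with the identification of the decaying branch for $n<0$ as $|n|^{k-1}\Gamma(1-k;4\pi|n|y)e^{-2\pi ny}$ is right. I checked the mode equation and the reduction to $yv_n''+(k-4\pi ny)v_n'=0$; both are correct.

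The genuine gap is your elimination of the second solution for $n>0$. The hypothesis in this paper is that $f$ grows \emph{at most linearly exponentially} at the cusps, i.e.\ $f=O(e^{Cy})$ for some $C>0$. The growing branch $c_n(y)\sim(4\pi n)^{-1}y^{-k}e^{2\pi ny}$ is itself of linear exponential growth, so this hypothesis does \emph{not} force its coefficient to vanish, and invoking ``the analogous bounds at every other cusp'' cannot rescue the step: the space of harmonic weak Maass forms with linear exponential growth genuinely contains forms whose non-holomorphic part has positive-index modes --- these are exactly the $\xi_k$-preimages of weakly holomorphic forms with a pole at $i\infty$, and $\xi_k$ surjects onto $M_{2-k}^!$ by the Bruinier--Funke proposition quoted right after the lemma. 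To obtain the expansion exactly as stated one needs either the stronger hypothesis that $f$ minus a polynomial in $q^{-1}$ is $O(e^{-\eps y})$ at each cusp (this is the setting in which Ono's Lemma 7.2 is actually proved, and for $k<1$ it also forces $c_f^-(0)=0$, i.e.\ kills the $y^{1-k}$ term that the present statement retains), or one restricts attention, as this paper implicitly does, to the subspaces $\M_k$ and $\calS_k$ whose shadows are holomorphic, so that the positive-index non-holomorphic coefficients vanish by definition. You should either strengthen the growth hypothesis or state this restriction explicitly; as written, the step ``forces its coefficient to vanish'' is false.
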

The functions $f^+$ (resp. $\tfrac{(4\pi y)^{1-k}}{k-1}\overline{c_f^-(0)}+f^-(\tau)$) in \Cref{prop:split} are referred to as the \emph{holomorphic} (resp. \emph{non-holomorphic}) \emph{part} of the harmonic Maa{\ss} form $f$. We define a \emph{mock modular form} of weight $k$ to be the holomorphic part of a harmonic Maa{\ss} form of the same weight. In weight $k=\tfrac 12$, we shall call $f^+$ a \emph{mock theta function} if its shadow (see below) is a unary theta function of weight $\tfrac 32$, i.e. of the form $\theta_{\chi,s}=\sum\limits_{n\in\Z} \chi(n)nq^{sn^2}$ for $s\in\N$ and $\chi$ an odd character. 

Let us also recall the following result.
\begin{proposition}[\cite{BF04}, Proposition 3.2]
For $k\neq 1$, the mapping 
\[\xi_k:\mathcal{H}_k(N,\chi)\rightarrow M_{2-k}^!(N,\overline{\chi}),\:f\mapsto\xi_kf\]
is well-defined and surjective with kernel $M_k^!(N,\chi)$, where $M_\ell^!$ denotes the space of weakly holomorphic modular forms of weight $\ell$. Moreover, for $f\in\mathcal{H}_k(N,\chi)$, we have that
\[(\xi_kf)(\tau)=-(4\pi)^{1-k}\sum\limits_{n=0}^\infty c_f^-(n)q^n.\]
\end{proposition}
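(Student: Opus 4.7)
The plan is to split the proof into three parts: checking that $\xi_k f$ transforms as a modular form of weight $2-k$ and character $\overline{\chi}$ and is holomorphic, computing its Fourier expansion (which also pins down the kernel), and finally addressing surjectivity.

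For the first part I would begin with the purely algebraic intertwining identity $\xi_k(f|_k\gamma)=(\xi_k f)|_{2-k}\gamma$ for $\gamma\in\SL_2(\R)$, verified by a direct chain-rule calculation using $\Im(\gamma\tau)=y|c\tau+d|^{-2}$; the complex conjugation built into $\xi_k$ turns the $\chi(d)$-equivariance of $f$ into $\overline{\chi(d)}$-equivariance of $\xi_k f$. To obtain holomorphicity, I would use the factorization $\Delta_k=-\xi_{2-k}\circ\xi_k$, which one verifies by expanding both sides in the coordinates $\tau=x+iy$. Since $\Delta_k f=0$ by hypothesis, we get $\xi_{2-k}(\xi_k f)=0$, hence $\overline{\partial_{\overline{\tau}}(\xi_k f)}=0$, so $\xi_k f$ is holomorphic on $\bbH$. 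The linear-exponential growth at the cusps transfers to at-most-polynomial growth of $\xi_k f$ after the factor $y^k$ is absorbed, yielding weak holomorphicity.

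For the explicit formula I would simply apply $\xi_k=2iy^k\overline{\partial/\partial\overline{\tau}}$ term-by-term to the canonical decomposition of \Cref{prop:split}. The holomorphic part $f^+$ is annihilated, and using $\partial_{\overline{\tau}} y=i/2$ together with $\frac{d}{dx}\Gamma(1-k;x)=-x^{-k}e^{-x}$, one finds
\[
\frac{\partial}{\partial\overline{\tau}}\Gamma(1-k;4\pi ny)=-2\pi in(4\pi ny)^{-k}e^{-4\pi ny}.
\]
Combining this with the elementary identity $e^{-4\pi ny}q^{-n}=\overline{q^n}$ and performing the analogous computation on the constant term $\tfrac{(4\pi y)^{1-k}}{k-1}\overline{c_f^-(0)}$ (where the $\tfrac{1}{k-1}$ is precisely the factor that makes the $n=0$ contribution match the $n\geq 1$ ones after differentiation), one obtains $\partial_{\overline{\tau}} f=-2\pi i(4\pi y)^{-k}\overline{\sum_{n\geq 0} c_f^-(n)q^n}$. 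Multiplying by $2iy^k$ after complex conjugation then yields exactly $-(4\pi)^{1-k}\sum_{n\geq 0} c_f^-(n)q^n$, which is the claimed formula. This expression also makes the kernel transparent: $\xi_k f\equiv 0$ if and only if all $c_f^-(n)$ vanish, if and only if the non-holomorphic part is identically zero, if and only if $f\in M_k^!(N,\chi)$.

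The main obstacle is surjectivity, and it is here that the hypothesis $k\neq 1$ is essential. Given $g\in M^!_{2-k}(N,\overline{\chi})$, I would construct a harmonic preimage by means of Maass-Poincar\'e series: for each cusp and principal part one writes down a series built from the Whittaker function $M_{\sign(n)k/2,(1-k)/2}(4\pi|n|y)$, proves convergence for $\Re(s)$ large, and analytically continues to the critical value $s=k/2$; at that point the image under $\xi_k$ is, up to an explicit constant expressible via Gamma factors, the corresponding classical holomorphic Poincar\'e series. Since these span $M^!_{2-k}$, the preimage of $g$ can be assembled as a linear combination, completing surjectivity. (Alternatively, one may produce the preimage as an Eichler integral of $g$ or invoke the Bruinier-Funke theta lift, but the Poincar\'e-series route is the one used in \cite{BF04} and is the cleanest for our purposes.)
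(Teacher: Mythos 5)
The paper does not prove this proposition at all: it is quoted verbatim from Bruinier--Funke \cite{BF04}, Proposition 3.2, so there is no in-paper argument to compare against. Judged on its own, your reconstruction is correct and follows the standard (indeed, essentially the original) route: the intertwining relation $\xi_k(f|_k\gamma)=(\xi_kf)|_{2-k}\gamma$ gives the transformation behaviour with conjugated character, the factorization $\Delta_k=-\xi_{2-k}\circ\xi_k$ gives holomorphicity of the image, the term-by-term differentiation of the splitting in \Cref{prop:split} gives the stated Fourier expansion (your constants check out: $\partial_{\overline{\tau}}y=i/2$ and $\tfrac{d}{dx}\Gamma(1-k;x)=-x^{-k}e^{-x}$ combine to produce exactly $-(4\pi)^{1-k}\sum_{n\ge0}c_f^-(n)q^n$, with the $\tfrac{1}{k-1}$ in the constant term doing precisely the job you describe), and surjectivity via analytically continued Maass--Poincar\'e series is the argument Bruinier and Funke themselves invoke.

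Two small imprecisions worth tightening. First, $\xi_kf$ need not have ``at-most-polynomial growth'' at the cusps --- a weakly holomorphic form may genuinely blow up there; what the linear-exponential bound on $f$ actually gives you is that the $q$-expansion of $\xi_kf$ at each cusp has only finitely many negative-index terms, which is exactly meromorphy at the cusps and hence membership in $M^!_{2-k}$. Second, the classical holomorphic Poincar\'e series span only $S_{2-k}$, not all of $M^!_{2-k}$; to realize an arbitrary principal part (and the Eisenstein component) you need the Maass--Poincar\'e series of negative index themselves, together with the verification that the analytic continuation to the special point $s$ exists and lands in $\mathcal{H}_k$ --- this is the genuinely nontrivial input, outsourced in \cite{BF04} to Bruinier's construction, and your sketch should acknowledge that dependence rather than fold it into ``these span $M^!_{2-k}$.'' Neither point invalidates the argument; both are matters of stating the right intermediate claim.
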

We call the function $(\xi_k f)$ the \emph{shadow} of $f$ (or of the mock modular form $f^+$) and denote the preimages of $M_{2-k}(N,\overline{\chi})$, the space of holomorphic modular forms, (resp. $S_{2-k}(N,\overline{\chi})$, the space of cusp forms) by $\M_k(N,\chi)$ (resp. $\calS_k(N,\chi)$). By $\M_k^{mock}(N,\chi)$ (resp. $\calS^{mock}_k(N,\chi)$) we denote the spaces of the respective holomorphic parts.

To conclude this section, we define the object of main interest in this paper.
\begin{definition}\label{def:mixed}
Let $f$ be a mock modular form of weight $k$ and $g$ be a holomorphic modular form of weight $\ell$. 
\begin{enumerate}[(i)]
\item The product $f\cdot g$ is called a \emph{mixed mock modular form} of weight $(k,\ell)$.
\item More generally, the $\nu$th Rankin-Cohen bracket 
\[[f,g]_\nu:=\sum\limits_{\mu=0}^\nu (-1)^\mu {{k+\nu-1} \choose \nu-\mu} {{\ell+\nu-1} \choose \mu} D^\mu fD^{\nu-\mu}g,\]
with $D:=D_\tau:=\tfrac{1}{2\pi i}\tfrac{d}{d\tau}$ of $f$ and $g$ is called a mixed mock modular form of weight $(k,\ell)$ of \emph{degree} $\nu$.
\end{enumerate}
\end{definition}
This terminology is motivated by the fact that each Rankin-Cohen bracket defines a more or less natural product on modular forms. Recall that for (real-analytic) modular forms $f$ and $g$ of weights $k$ and $\ell$, we have that $[f,g]_0=f\cdot g$ and that $[f,g]_\nu$ is modular of weight $k+\ell+2\nu$ (cf. \cite{Coh75}, Theorem 7.1), hence e.g. the completion of a mixed mock modular form of weight $(k,\ell)$ and degree $\nu$ is a real-analytic modular form of weight $k+\ell+2\nu$. 

\section{Holomorphic Projection}\label{sec:HolProj}
Here, we investigate the properties of the holomorphic projection operator introduced by J. Sturm \cite{St80} and further developed in \cite{GrZ86}. Using an idea of S. Zwegers, D. Zagier has worked out the action of this operator on mixed mock modular forms, and also higher degree mixed mock modular forms \cite{ZagNotes}. Recently in \cite{IRR13}, \"O. Imamo\u{g}lu, M. Raum, and O. Richter extended this in degree $0$, i.e. usual mixed mock modular forms, to the case of vector-valued forms. Special cases of our main results also obtained by holomorphic projection are also contained in \cite{AA14,ARZ}

Since Zagier's results have not been published so far in full generality but seem to be useful in a broader context, we give an account of them here.
\begin{definition}\label{def:pihol}
Let $f:\bbH\rightarrow\C$ be a continuous function transforming like a modular form of weight $k\geq 2$ on some $\Gamma_0(N)$ with Fourier expansion
\[f(\tau)=\sum\limits_{n\in\Z} a_f(n,y)q^n.\]
For a cusp $\kappa_j$, $j=1,...,M$ and $\kappa_1:=i\infty$, of $\Gamma_0(N)$ fix $\gamma_j\in\SL_2(\Z)$ with $\gamma_j\kappa_j=i\infty$. Assume that for some $\delta,\eps>0$, and $k\in\tfrac 12\Z$, ($k\geq 2$) we have
\begin{enumerate}
\item $f(\gamma_j^{-1}w)\left(\frac d{dw}\tau\right)^\frac k2=c_0^{(j)}+O(\Im(w)^{-\delta})$
for all $j=1,...,M$ and $w=\gamma_j\tau$,
\item $a_f(n,y)=\calO(y^{1-k+\eps})$ as $y\rightarrow 0$ for all $n>0$.
\end{enumerate}
Then we define the \emph{holomorphic projection} of $f$ by
\[
(\pihol f)(\tau):=(\pihol^kf)(\tau):=c_0+\sum\limits_{n=1}^\infty c(n)q^n,
\]
with $c_0=c_0^{(1)}$ and
\begin{gather}\label{eq:holproj}
\begin{aligned}
c(n)=\frac{(4\pi n)^{k-1}}{(k-2)!}\int_0^\infty a_f(n,y)e^{-4\pi ny}y^{k-2}dy
\end{aligned}
\end{gather}
for $n\geq 1$. For $\ell\notin\N_0$ we set as usual $\ell !:=\Gamma(\ell+1)$, where $\Gamma$ denotes Euler's Gamma function.
\end{definition}
\begin{proposition}\label{prop:pihol2}[\cite{GrZ86}, Proposition 5.1 and Proposition 6.2, \cite{IRR13}, Proposition 3.2 and Theorem 3.3] 
Let $f:\bbH\rightarrow \C$ be as in \Cref{def:pihol}.
\begin{enumerate}[(i)]
\item If $f$ is holomorphic, then $\pihol f=f$.
\item We have that $\pihol f\in M_k(\Gamma)$ if $k>2$ and $\pihol f$ is a quasi-modular form of weight $2$ (cf. \cite{KZ95}) if $k=2$. 
\end{enumerate}
\end{proposition}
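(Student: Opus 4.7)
The plan is to prove both parts by identifying $c(n)$ with a Petersson inner product against a suitable Poincar\'e series. Part (i) is essentially immediate: substituting the holomorphic expansion $a_f(n,y) = a_f(n)$ into \eqref{eq:holproj}, the integral $\int_0^\infty a_f(n) e^{-4\pi n y} y^{k-2}\, dy$ evaluates via the standard Gamma integral to $a_f(n)(k-2)!/(4\pi n)^{k-1}$, which exactly cancels the prefactor and yields $c(n) = a_f(n)$ for $n \geq 1$. Condition (1) pins down $c_0 = a_f(0)$, so $\pihol f = f$.

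For part (ii), I would represent $c(n)$ as a Petersson inner product. For $\Re(s)$ sufficiently large, define the non-holomorphic Poincar\'e series
\[
P_n(\tau, s) := \sum_{\gamma \in \Gamma_\infty \backslash \Gamma_0(N)} \bigl( y^s e^{2\pi i n \tau} \bigr) \bigl|_k \gamma, \qquad n \geq 1.
\]
The growth assumptions (1) and (2) on $f$ are precisely what is needed to justify unfolding the inner product, giving
\[
\langle f, P_n(\cdot, \overline{s}) \rangle = \int_0^\infty a_f(n, y)\, e^{-4\pi n y}\, y^{s + k - 2}\, dy.
\]
Specialising at $s = 0$ identifies the right-hand side with $\tfrac{(k-2)!}{(4\pi n)^{k-1}} c(n)$, so the positive Fourier coefficients of $\pihol f$ are exactly those of the Petersson projection of $f$ onto the span of the $P_n(\cdot, 0)$.

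For $k > 2$, the series $P_n(\tau, 0)$ converges absolutely, lies in $S_k(\Gamma_0(N))$, and the collection of all such Poincar\'e series spans $S_k$; carrying out the analogous computation at each cusp $\kappa_j$ using $\gamma_j$-translated Poincar\'e series, together with condition (1) to supply the Eisenstein contribution to the constant term, shows $\pihol f \in M_k(\Gamma_0(N))$. The weight-two case is the main obstacle: absolute convergence of $P_n(\tau, 0)$ fails, and one must define this series at $s = 0$ via Hecke's trick, i.e. by analytic continuation of $P_n(\tau, s)$ from $\Re(s) > 0$. The resulting limit is no longer holomorphic in $\tau$, picking up an explicit non-holomorphic term of the shape $C/y$ which is responsible for the quasi-modular (rather than strictly modular) transformation law of weight $2$. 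Justifying the interchange of the limit $s \to 0^+$ with the unfolding integral and tracking the precise correction term is the technical heart of the argument, and is carried out in detail in \cite{GrZ86}.
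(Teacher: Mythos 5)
Your argument is correct and is essentially the proof given in the sources the paper cites: the proposition is stated here without proof, quoting \cite{GrZ86} and \cite{IRR13}, and the Gross--Zagier argument is exactly your unfolding of the Petersson pairing against Poincar\'e series, with the weight-two case handled by Hecke's trick and the resulting $1/y$-term accounting for the quasi-modularity. The only points you leave implicit are routine but worth naming: one must first subtract the Eisenstein contributions determined by the constants $c_0^{(j)}$ at all cusps so that the pairing converges and the positive-index coefficients are picked up by cuspidal Poincar\'e series alone, and for half-integral $k$ (which \Cref{def:pihol} permits) the Poincar\'e series must carry the theta multiplier --- neither affects the structure of the proof.
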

We now collect some handy properties of the operator $\pihol$. For this we recall the following operators ($f$ as in \Cref{def:pihol}, $N\in\N$, and $r\in\{0,\dots ,N-1\}$, $\chi$ some generalized character)
\begin{align}
\label{eq:U} (f|U(N))(\tau)&=\sum\limits_{n\in\Z} a_f\left(Nn,\frac yN\right)q^n,\\
\label{eq:V} (f|V(N))(\tau)&=f(N\tau),\\
\label{eq:S} (f|S_{N,r})(\tau)&=\sum\limits_{\substack{n\in\Z\\n\equiv r\pmod{N}}} a_f(n,y)q^n\qquad\text{``sieving operator''},\\
\label{eq:tensor} (f\otimes\chi)(\tau)&\sum\limits_{n\in\Z} a_f(n,y)\chi(n)q^n,
\end{align}
which all map modular forms to modular forms, in general of different level.
\begin{lemma}\label{lem:proppihol}
Let $f:\bbH\rightarrow\C$ be a function as in \Cref{def:pihol}, $N\in\N$, and $r\in\{0,\dots ,N-1\}$. Then the following holds.
\begin{enumerate}[(i)]
\item The operator $\pihol$ commutes with all the operators $U(N),\,V(N)$, $S_{N,r}$, and $\otimes\chi$.
\item If $f$ is modular of weight $k>2$ on $\Gamma\leq\SLZ$ then we have 
\[\langle f,g\rangle=\langle\pihol(f),g\rangle,\]
where $\langle\cdot,\cdot\rangle$ denotes the Petersson scalar product, for every cusp form $g\in S_k(\Gamma)$.
\end{enumerate}
\end{lemma}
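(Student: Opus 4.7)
For part (i), my plan is to verify each of the four commutation identities by a direct Fourier-coefficient computation using the definition \eqref{eq:holproj}. Writing $h := f|U(N)$, one has $a_h(n,y) = a_f(Nn, y/N)$; substituting this into \eqref{eq:holproj} and performing the change of variables $u = y/N$ should convert the formula for the $n$th Fourier coefficient of $\pihol(h)$ into the formula for the $(Nn)$th Fourier coefficient of $\pihol(f)$, which is precisely the $n$th Fourier coefficient of $\pihol(f)|U(N)$. The $V(N)$ case runs completely in parallel, now with the substitution $u = Ny$, which turns the integral for the $(Nm)$th coefficient of $\pihol(f|V(N))$ into the one for the $m$th coefficient of $\pihol(f)$. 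For the sieving operator $S_{N,r}$ and the twist $\otimes\chi$, the verification is essentially tautological: these operators act only on the discrete index $n$, leaving the $y$-dependence of $a_f(n,y)$ untouched, and so they pass freely through the integral. One also checks that the constant term $c_0$ transforms consistently in each case.

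For part (ii), my plan is to reduce to the classical reproducing property of Poincar\'e series. Let $P_{n,k}$ denote the weight $k$ Poincar\'e series on $\Gamma$ attached to the cusp $\infty$ and index $n\geq 1$; since these span $S_k(\Gamma)$, it suffices to verify $\langle f, P_{n,k}\rangle = \langle \pihol(f), P_{n,k}\rangle$ for every $n\geq 1$. For the right-hand side, $\pihol(f)\in M_k(\Gamma)$ is genuinely holomorphic by \Cref{prop:pihol2}, and the standard unfolding argument yields $\langle \pihol(f), P_{n,k}\rangle = \frac{(k-2)!}{(4\pi n)^{k-1}} c(n)$, where $c(n)$ is the $n$th Fourier coefficient of $\pihol(f)$. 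The very same unfolding applied to the left-hand side uses only the $\Gamma$-modularity of $f$ and produces
\[
\langle f, P_{n,k}\rangle = \int_0^\infty a_f(n, y)\, e^{-4\pi n y}\, y^{k-2}\, dy,
\]
which by the defining formula \eqref{eq:holproj} equals the same expression.

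I expect the main obstacle to be analytic rather than algebraic: one must justify absolute convergence of the defining integrals for $c(n)$, convergence of the Poincar\'e-series pairing, and the interchange of summation and integration in the unfolding. This is precisely what the hypotheses of \Cref{def:pihol} are designed to deliver. Condition (ii), the bound $a_f(n,y) = \calO(y^{1-k+\eps})$ as $y\to 0^+$, ensures absolute convergence of $\int_0^\infty a_f(n,y) e^{-4\pi n y} y^{k-2}\, dy$; condition (i), the controlled behaviour of $f$ at every cusp modulo a term of order $\Im(w)^{-\delta}$, guarantees that the Petersson pairing against $P_{n,k}$ is well-defined and that unfolding a single strip is legitimate. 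Once these analytic points are secured, both parts of the lemma reduce to routine bookkeeping with Fourier expansions and changes of variables.
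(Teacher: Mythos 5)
Your proposal is correct and matches the paper's (very terse) proof in substance: the paper declares part (i) ``obvious from the definition''---which is exactly the coefficient-by-coefficient substitution you carry out---and for part (ii) it simply cites \cite{GrZ86}, Proposition 5.1, whose proof is precisely the Poincar\'e-series unfolding you describe (the defining integral \eqref{eq:holproj} is engineered so that $\pihol(f)$ and $f$ have the same pairing with every $P_{n,k}$, and these span $S_k(\Gamma)$ for $k>2$). No gaps; your version just supplies the details the paper delegates to the definition and to the reference.
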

\begin{proof}
Assertion $(i)$ is obvious from the definition and for assertion $(ii)$ we refer to \cite{GrZ86}, Proposition 5.1.
\end{proof}
For the rest of this section, let $f\in\mathcal{H}_k(N)$ with a Fourier expansion as in \Cref{prop:split} and $g\in M_\ell(N)$ ($k,\ell\in\tfrac 12\N$, $k,\ell\neq 1$) with
\[g(\tau)=\sum\limits_{n=0}^\infty a_g(n)q^n\]
(we ignore characters for the moment) such that $k+\ell\in\N$, $k+\ell\geq 2$, and $[f,g]_\nu$ fulfills the conditions in \Cref{def:pihol}. This is the case for example if $f\in\calS_k$ and $f^+\cdot g$ is holomorphic at the cusps (cf. \cite{IRR13}, Theorem 3.5). 

Following and slightly extending \cite{ZagNotes}, we find an explicit formula for the Fourier coefficients of 
\begin{gather}\label{eq:piholRCB}
\pihol([f,g]_\nu)=[f^+,g]_\nu+\frac{(4\pi)^{1-k}}{k-1}\overline{c_f^-(0)}\pihol([y^{1-k},g]_\nu)+\pihol([f^-,g]_\nu).
\end{gather}
\begin{lemma}\label{lem:piholy}
We have 
\[\frac{(4\pi)^{1-k}}{k-1}\pihol([y^{1-k},g]_\nu)=\kappa\sum\limits_{n=0}^\infty n^{k+\nu-1}a_g(n)q^n\]
where $\kappa$ depends only on $k,\ell,\nu$. To be precise,
\begin{gather}\label{eq:kappa}
\begin{aligned}
\kappa=\kappa(k,\ell,\nu)&=\frac{1}{(k+\ell+2\nu-2)!(k-1)}\sum\limits_{\mu=0}^\nu\left[\frac{\Gamma(2-k)\Gamma(\ell+2\nu-\mu)}{\Gamma(2-k-\mu)}\right.\\
                         & \qquad\qquad\left.\times{{k+\nu-1} \choose \nu-\mu} {{\ell+\nu-1} \choose \mu}\right].
                         \end{aligned}
                         \end{gather}
\end{lemma}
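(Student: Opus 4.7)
The plan is a direct computation. Viewing $\tau$ and $\overline{\tau}$ as independent, $y = (\tau-\overline{\tau})/(2i)$ yields $Dy = -\tfrac{1}{4\pi}$; iterating then gives
\[D^\mu(y^{1-k}) = \frac{(-1)^\mu}{(4\pi)^\mu}\prod_{j=0}^{\mu-1}(1-k-j)\, y^{1-k-\mu} = \frac{(-1)^\mu\,\Gamma(2-k)}{(4\pi)^\mu\,\Gamma(2-k-\mu)}\, y^{1-k-\mu},\]
where the Gamma-ratio is to be interpreted via the finite product whenever $k\in\N$ makes individual Gammas singular. Substituting this into the definition of the Rankin-Cohen bracket from \Cref{def:mixed} together with $D^{\nu-\mu}g = \sum_{n\geq 0}n^{\nu-\mu}a_g(n)q^n$, the sign $(-1)^\mu$ in the bracket cancels the one just produced, so that $[y^{1-k},g]_\nu$ equals a finite sum over $\mu$ of terms of the shape $C_\mu\, y^{1-k-\mu}\sum_{n\geq 0}n^{\nu-\mu}a_g(n)q^n$ with explicit constants $C_\mu$ depending only on $k,\ell,\nu,\mu$.

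Next I apply $\pihol$ at weight $k+\ell+2\nu$ termwise using \eqref{eq:holproj}. The only integral that appears is the standard Gamma integral
\[\int_0^\infty y^{(1-k-\mu)+(k+\ell+2\nu-2)}\,e^{-4\pi n y}\,dy = \frac{\Gamma(\ell+2\nu-\mu)}{(4\pi n)^{\ell+2\nu-\mu}},\]
and combining it with the prefactor $(4\pi n)^{k+\ell+2\nu-1}/(k+\ell+2\nu-2)!$ collapses the surviving $n$-power to $n^{k+\nu-1}$ and the $4\pi$-power to $(4\pi)^{k-1}$, both independent of $\mu$. Multiplying by the external factor $(4\pi)^{1-k}/(k-1)$ from the statement then cancels the remaining power of $4\pi$ and gathers all $\mu$-dependence into precisely the weighted sum $\kappa(k,\ell,\nu)$ displayed in \eqref{eq:kappa}, leaving the $q$-series $\sum_{n\geq 0}n^{k+\nu-1}a_g(n)q^n$.

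The $n=0$ term contributes $0$ since $k+\nu>1$, which agrees with the fact that $y^{1-k-\mu}\to 0$ as $y\to\infty$, so the asymptotic constant $c_0$ in \Cref{def:pihol} is zero. There is essentially no obstacle beyond careful bookkeeping of the exponents of $n$ and $4\pi$; the only conceptual subtlety is verifying that $[y^{1-k},g]_\nu$ itself satisfies the growth hypotheses needed to apply $\pihol$, but this is inherited from the corresponding hypotheses on $[f,g]_\nu$ via the splitting in \Cref{prop:split} and is standard in this setting.
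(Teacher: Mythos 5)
Your computation is correct and follows essentially the same route as the paper: differentiate $y^{1-k}$ to get the $\Gamma(2-k)/\Gamma(2-k-\mu)$ factor, note the sign cancellation with the $(-1)^\mu$ in the Rankin--Cohen bracket, and evaluate the resulting Gamma integral $\int_0^\infty y^{\ell+2\nu-\mu-1}e^{-4\pi ny}\,dy$ so that the powers of $n$ and $4\pi$ become independent of $\mu$. The bookkeeping of exponents matches the paper's, and your concluding remarks on the $n=0$ term and the growth hypotheses are harmless additions.
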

\begin{proof}
It holds that 
\begin{align*}
D^\mu(y^{1-k})&=\frac{\Gamma(2-k)}{\Gamma(2-k-\mu)}\left(-\frac{1}{4\pi}\right)^\mu y^{1-k-\mu}
\end{align*}
and 
\[(D^{\nu-\mu}g)(\tau)=\sum\limits_{n=0}^\infty n^{\nu-\mu}a_g(n)q^n.\]

Thus the $n$th coefficient of $[y^{1-k},g]_\nu$ equals
\[a_g(n)\sum\limits_{\mu=0}^\nu (-1)^\mu {{k+\nu-1} \choose \nu-\mu} {{\ell+\nu-1} \choose \mu} \frac{\Gamma(2-k)}{\Gamma(2-k-\mu)}\left(-\frac{1}{4\pi}\right)^\mu y^{1-k-\mu}n^{\nu-\mu}.\]
 
We also calculate
\[\int\limits_0^\infty e^{-4\pi ny}y^{\ell+2\nu-\mu-1}=\left(\frac{1}{4\pi n}\right)^{\ell+2\nu-\mu}\Gamma(\ell+2\nu-\mu),\]
thus we get the following expression for the $n$th coefficient of $\pihol([y^{1-k},g]_\nu)$,
\begin{align*}
\frac{(4\pi n)^{k-1}}{(k+\ell+2\nu-2)!}n^\nu a_g(n)\sum\limits_{\mu=0}^\nu {{k+\nu-1} \choose \nu-\mu} {{\ell+\nu-1} \choose \mu} \frac{\Gamma(2-k)\Gamma(\ell+2\nu-\mu)}{\Gamma(2-k-\mu)},
\end{align*}
which is what we claimed.
\end{proof}
The third summand in \eqref{eq:piholRCB} requires a little more work. First we compute the derivatives of $f^-$.
\begin{lemma}\label{lem:Dmufm}
It holds that
\[(D^\mu f^-)(\tau)=(-1)^{\mu}\frac{\Gamma(1-k)}{\Gamma(1-k-\mu)}\sum\limits_{n=1}^\infty n^{k+\mu-1}\overline{c_f^-(n)}\Gamma(1-k-\mu;4\pi ny)q^{-n}\]
\end{lemma}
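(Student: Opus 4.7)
The plan is to prove the formula by induction on $\mu$, with the base case $\mu=0$ reducing to the definition of $f^-$ given in \Cref{prop:split}. The inductive step amounts to computing the action of $D$ on a single summand of the shape $\Gamma(\alpha;4\pi n y)q^{-n}$ and then bookkeeping with the functional equation of $\Gamma$.

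For the inductive step, I would fix $n\ge 1$ and compute $D\bigl[\Gamma(\alpha;4\pi n y)q^{-n}\bigr]$ for arbitrary $\alpha$. Writing $D=\tfrac{1}{2\pi i}\tfrac{\partial}{\partial\tau}$ as the Wirtinger derivative (with $\bar\tau$ held fixed, so that $\tfrac{\partial y}{\partial\tau}=\tfrac{1}{2i}$), and using $\tfrac{\partial}{\partial x}\Gamma(\alpha;x)=-x^{\alpha-1}e^{-x}$, the product rule yields
\[
D\bigl[\Gamma(\alpha;4\pi ny)q^{-n}\bigr]=\frac{(4\pi n)^{\alpha}y^{\alpha-1}e^{-4\pi n y}}{4\pi}q^{-n}-n\,\Gamma(\alpha;4\pi ny)q^{-n}.
\]
The key simplification comes from integration by parts in the defining integral of the incomplete Gamma function, which gives the recurrence $\Gamma(\alpha;x)=x^{\alpha-1}e^{-x}+(\alpha-1)\Gamma(\alpha-1;x)$. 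Rewriting $x^{\alpha-1}e^{-x}$ with $x=4\pi n y$ and plugging back in, the two terms above combine neatly into
\[
D\bigl[\Gamma(\alpha;4\pi ny)q^{-n}\bigr]=-(\alpha-1)\,n\,\Gamma(\alpha-1;4\pi n y)q^{-n}.
\]

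Specializing to $\alpha=1-k-\mu$ and applying this term-by-term to the induction hypothesis produces an overall factor of $-(\alpha-1)n=(k+\mu)n$ and shifts the argument of the incomplete Gamma to $-k-\mu=1-k-(\mu+1)$. Matching this with the claimed formula for $D^{\mu+1}f^-$ reduces to the identity $(k+\mu)/\Gamma(1-k-\mu)=-1/\Gamma(-k-\mu)$, which is immediate from $\Gamma(z+1)=z\Gamma(z)$ applied at $z=-k-\mu$, together with the sign change $(-1)^{\mu}\to(-1)^{\mu+1}$.

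The only subtle point is handling the Wirtinger derivative correctly, since $f^-$ is only real-analytic and not holomorphic; once one settles on $\tfrac{\partial y}{\partial\tau}=\tfrac{1}{2i}$, everything else is a routine application of the incomplete Gamma recurrence and the Gamma functional equation. There is no convergence issue because the series for $f^-$ converges absolutely on $\bbH$ and termwise differentiation is justified on compacta.
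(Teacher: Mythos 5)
Your proof is correct and rests on the same computation as the paper's: the Wirtinger derivative with $\tfrac{\partial y}{\partial\tau}=\tfrac{1}{2i}$ combined with the incomplete Gamma recurrence $\Gamma(\alpha;x)=x^{\alpha-1}e^{-x}+(\alpha-1)\Gamma(\alpha-1;x)$ and the functional equation of $\Gamma$. The paper merely packages the identical step differently, rewriting $f^-$ in terms of $\Gamma^*(\alpha;x):=e^x\Gamma(\alpha;x)$ times an antiholomorphic exponential so that the $\mu$-fold derivative is taken in one stroke instead of by induction.
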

\begin{proof}
We write $f^-$ as
\[f^-(\tau)=\sum\limits_{n=1}^\infty n^{k-1}\overline{c_f^-(n)}\Gamma^*(1-k,4\pi ny)\overline{q}^{-n},\]
where $\Gamma^*(\alpha;x):=e^x\Gamma(\alpha;x)$. It is easy to check that
\[\frac{d^\mu}{dx^\mu}\Gamma^*(\alpha;x)=\frac{\Gamma(\alpha)}{\Gamma(\alpha-\mu)}\Gamma^*(\alpha-\mu;x).\]
Therefore we have
\begin{align*}
(D^\mu f^-)(\tau)&=\left(\frac{1}{2\pi i}\right)^\mu\sum\limits_{n=1}^\infty n^{k-1}\overline{c_f^-(n)}\left(\frac{4\pi n}{2i}\right)^\mu \frac{\Gamma(1-k)}{\Gamma(1-k-\mu)}\\
                &\qquad\qquad\qquad\qquad\qquad\qquad\times\Gamma^*\left(1-k-\mu;4\pi n\frac{1}{2i}(\tau-\overline{\tau})\right)\overline{q}^{-n}\\
                &=(-1)^{\mu}\frac{\Gamma(1-k)}{\Gamma(1-k-\mu)}\sum\limits_{n=1}^\infty n^{k+\mu-1}\overline{c_f^-(n)}\Gamma(1-k-\mu;4\pi ny)q^{-n},
\end{align*}
which is what we have claimed.
\end{proof}
With this we can give an expression for the holomorphic projection of Rankin-Cohen brackets. We define the following homogeneous polynomials $P_{a,b}(X,Y)\in\C[X,Y]$ of degree $a-2$.
\begin{gather}\label{eq:P}
P_{a,b}(X,Y):=\sum\limits_{j=0}^{a-2} {{j+b-2} \choose {j}} X^j(X+Y)^{a-j-2}.
\end{gather}
\begin{theorem}
Assuming that the coefficients $c_f^-(n)$ and $a_g(n)$ grow sufficiently moderately, i.e. the integral defining $\pihol([f^-,g]_\nu)$ is absolutely convergent, the holomorphic projection of $f^-$ and $g$ is given by
\[\pihol([f^-,g]_\nu)=\sum\limits_{r=1}^\infty b(r)q^r,\]
whereas $b(r)$ is given by
\begin{gather}\label{eq:piholfm}
\begin{aligned}
b(r)=&-\Gamma(1-k)\sum\limits_{m-n=r}\sum\limits_{\mu=0}^\nu {{k+\nu-1} \choose {\nu-\mu}}{{\ell+\nu-1} \choose \mu}m^{\nu-\mu}a_g(m)\overline{c_f^-(n)}\\
     &\qquad\qquad\times\left(m^{\mu-2\nu-\ell+1}P_{k+\ell+2\nu,2-k-\mu}(r,n)-n^{k+\mu-1} \right)
\end{aligned}
\end{gather}
\end{theorem}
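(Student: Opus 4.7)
The plan is to compute $b(r)$ directly from \Cref{def:pihol}. Using the explicit Fourier expansion of $D^\mu f^-$ given by \Cref{lem:Dmufm} and the trivial identity $D^{\nu-\mu}g=\sum_m m^{\nu-\mu}a_g(m)q^m$, the signs $(-1)^\mu$ in the Rankin--Cohen bracket and in $D^\mu f^-$ cancel, so the $r$-th Fourier coefficient of $[f^-,g]_\nu$, viewed as a function of $y$, equals
\[
\sum_{\mu=0}^\nu \binom{k+\nu-1}{\nu-\mu}\binom{\ell+\nu-1}{\mu}\frac{\Gamma(1-k)}{\Gamma(1-k-\mu)}\!\!\sum_{m-n=r}\!\! n^{k+\mu-1}m^{\nu-\mu}\overline{c_f^-(n)}a_g(m)\,\Gamma(1-k-\mu;4\pi ny).
\]
Inserting this into \eqref{eq:holproj} reduces the theorem to evaluating, for each admissible $\mu$ and each pair $(m,n)$ with $m-n=r$, the single integral
\[
I_\mu:=\int_0^\infty \Gamma(1-k-\mu;4\pi ny)e^{-4\pi ry}y^{k+\ell+2\nu-2}\d y.
\]

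To evaluate $I_\mu$ I would unfold $\Gamma(1-k-\mu;4\pi ny)=\int_{4\pi ny}^\infty t^{-k-\mu}e^{-t}\d t$, swap the order of integration via Fubini, and after the substitution $t=4\pi nu$ rewrite
\[
I_\mu=(4\pi n)^{1-k-\mu}\int_0^\infty u^{-k-\mu}e^{-4\pi nu}\int_0^u e^{-4\pi ry}y^{\alpha-1}\d y\,\d u,\qquad \alpha:=k+\ell+2\nu-1.
\]
The inner integral equals $(4\pi r)^{-\alpha}\gamma(\alpha;4\pi ru)$, and because $\alpha$ is a positive integer the closed form $\gamma(\alpha;x)=(\alpha-1)!\bigl[1-e^{-x}\sum_{j=0}^{\alpha-1}x^j/j!\bigr]$ applies. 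Expanding the bracket and interchanging summation with integration produces a linear combination of monomial integrals $\int_0^\infty u^{j-k-\mu}e^{-Au}\d u$ with $A\in\{4\pi n,4\pi m\}$, each evaluated (via analytic continuation in $k$) as $A^{k+\mu-j-1}\Gamma(1-k-\mu+j)$.

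The next step is to recognise the resulting finite sum as the polynomial $P_{k+\ell+2\nu,2-k-\mu}(r,n)$. Using the identity $\Gamma(1-k-\mu+j)/j!=\binom{j-k-\mu}{j}\Gamma(1-k-\mu)$ together with $m=r+n$, the sum $\sum_{j=0}^{\alpha-1}\Gamma(1-k-\mu+j)/j!\,r^jm^{k+\mu-1-j}$ is identified, via \eqref{eq:P}, with $\Gamma(1-k-\mu)\,m^{\mu-2\nu-\ell+1}P_{k+\ell+2\nu,2-k-\mu}(r,n)$. A short bookkeeping then yields
\[
I_\mu=-(4\pi r)^{-\alpha}(\alpha-1)!\,\Gamma(1-k-\mu)\,n^{1-k-\mu}\bigl[m^{\mu-2\nu-\ell+1}P_{k+\ell+2\nu,2-k-\mu}(r,n)-n^{k+\mu-1}\bigr],
\]
where the extra $-n^{k+\mu-1}$ arises precisely from the ``complete'' $\Gamma(\alpha)\Gamma(1-k-\mu)$ contribution corresponding to the constant $1$ in the expansion of $\gamma(\alpha;x)/(\alpha-1)!$. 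Substituting back into the expression for $b(r)$, the prefactor $(4\pi r)^{k+\ell+2\nu-1}/(k+\ell+2\nu-2)!$ cancels $(4\pi r)^{-\alpha}(\alpha-1)!$, the factor $\Gamma(1-k)/\Gamma(1-k-\mu)$ cancels $\Gamma(1-k-\mu)$, and $n^{k+\mu-1}\cdot n^{1-k-\mu}=1$, leaving exactly \eqref{eq:piholfm}.

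The principal obstacle is making the formal splitting of $I_\mu$ rigorous, since the individual monomial integrals diverge at $u=0$ whenever $k+\mu-j\geq 1$, even though $I_\mu$ itself converges absolutely by hypothesis. I would address this by treating $k$ as a complex parameter in a half-plane $\Re(k)\ll 0$ where every partial integral converges absolutely, establishing the identity there, and then analytically continuing in $k$ to the actual weight (both sides being meromorphic in $k$ and finite on the relevant domain). A cleaner alternative, and presumably the ``more subtle but less computational'' variant mentioned in the introduction, is to keep $\gamma(\alpha;4\pi ru)/(\alpha-1)!$ intact so that $I_\mu$ stays a convergent double integral, reduce it by a standard substitution to a Beta-type integral whose hypergeometric evaluation simplifies, via the Pfaff--Saalsch\"utz identity~\eqref{eq:Pfaff}, to the very same polynomial expression.
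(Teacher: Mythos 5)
Your proposal is correct and reaches the same closed form for the key integral as the paper, but it evaluates that integral by a genuinely different computation. The skeleton is identical: you expand $[f^-,g]_\nu$ via \Cref{lem:Dmufm} (correctly noting the cancellation of the signs $(-1)^\mu$), insert the result into \eqref{eq:holproj}, and reduce everything to $I_\mu=\int_0^\infty \Gamma(1-k-\mu;4\pi ny)e^{-4\pi ry}y^{k+\ell+2\nu-2}\,\mathrm{d}y$. The paper evaluates $I_\mu$ by unfolding, performing the $y$-integral first so as to land on $\int_1^\infty t^{-k-\mu}(r+nt)^{\mu-\ell-2\nu}\mathrm{d}t$, substituting $t\mapsto 1/t$, and then writing the integrand as a $(k+\ell+2\nu-2)$-fold $r$-derivative of $(rt+n)^{k+\mu-2}$ so that the $t$-integral becomes elementary; the reflection formula \eqref{eq:Gamma} then cleans up the Gamma factors. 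You instead swap the order the other way, recognize the inner integral as a lower incomplete Gamma function, and exploit the finite closed form of $\gamma(\alpha;x)$ for the positive integer $\alpha=k+\ell+2\nu-1$ to split $I_\mu$ into elementary Gamma integrals; your identification of the resulting finite sum with $m^{\mu-2\nu-\ell+1}P_{k+\ell+2\nu,2-k-\mu}(r,n)$ via \eqref{eq:P} is correct, and the bookkeeping does produce \eqref{eq:piholfm}. Your route is arguably more transparent about where the two terms in the bracket of \eqref{eq:piholfm} come from (the ``complete'' Gamma versus the finite exponential sum), at the cost of the divergence issue you flag; the paper's route never leaves the realm of absolutely convergent integrals, since $\int_0^1 t^{k+\ell+2\nu-2}(rt+n)^{\mu-\ell-2\nu}\mathrm{d}t$ converges outright, and only needs to justify an interchange of $\partial_r^{k+\ell+2\nu-2}$ with the integral.

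One caveat on your regularization: analytic continuation ``in $k$'' alone does not quite work as stated, because moving $k$ off its half-integral value destroys the integrality of $\alpha=k+\ell+2\nu-1$ on which your closed form for $\gamma(\alpha;x)$ depends. The standard repair is to introduce an independent auxiliary parameter --- e.g.\ replace $u^{-k-\mu}$ by $u^{-k-\mu+s}$ with $\Re(s)\gg 0$, keep $\alpha$ fixed, and let $s\to 0$ at the end --- or simply to truncate the $u$-integral at $\eps$ and observe that the divergent contributions of the individual terms cancel as $\eps\to 0$ because $\gamma(\alpha;4\pi ru)=O(u^{\alpha})$ near $u=0$. With that adjustment the argument is complete.
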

\begin{proof}
By \Cref{lem:Dmufm} we see that
\[[f^-,g]_\nu(\tau)=\sum\limits_{r\in\Z} b(r,y)q^r,\]
where 
\begin{align*}
b(r,y)=&\sum\limits_{m-n=r}\sum\limits_{\mu=0}^\nu {{k+\nu-1} \choose \nu-\mu} {{\ell+\nu-1} \choose \mu}\\
       &\qquad\qquad\qquad\times\frac{\Gamma(1-k)}{\Gamma(1-k-\mu)}n^{k+\mu-1}\overline{c_f^-(n)}\Gamma(1-k-\mu;4\pi ny)a_g(m)m^{\nu-\mu}.
\end{align*}
Applying $\pihol$ then yields
\[\pihol([f^-,g]_\nu)=\sum\limits_{r=1}^\infty b(r)q^r,\]
with 
\begin{align*}
b(r)=&\frac{(4\pi r)^{k+\ell+2\nu-1}}{(k+\ell+2\nu-2)!}\sum\limits_{m-n=r}\sum\limits_{\mu=0}^\nu {{k+\nu-1} \choose \nu-\mu} {{\ell+\nu-1} \choose \mu}\\
     &\qquad\qquad\qquad\times\frac{\Gamma(1-k)}{\Gamma(1-k-\mu)}n^{k+\mu-1}\overline{c_f^-(n)}a_g(m)m^{\nu-\mu}\\
     &\qquad\qquad\qquad\times\int\limits_0^\infty \Gamma(1-k-\mu;4\pi ny)e^{-4\pi ry}y^{k+\ell+2\nu-2}dy.
\end{align*}
The interchanging of integration and summation is justified by the assumption on the growth of the Fourier coefficients and applying the Theorem of Fubini-Tonelli.

The calculation of the integral $\int\limits_0^\infty \Gamma(1-k-\mu;4\pi ny)e^{-4\pi ry}y^{k+\ell+2\nu-2}dy$ carried out in the following lemma implies the claim.
\end{proof}
\begin{lemma}
The following identity holds true.
\begin{align*}
I:&=\int\limits_0^\infty \Gamma(1-k-\mu;4\pi ny)e^{-4\pi ry}y^{k+\ell+2\nu-2}dy\\
  &=-(4\pi)^{1-(k+\ell+2\nu)}n^{1-k-\mu}\frac{\Gamma(1-k-\mu)(k+\ell+2\nu-2)!}{r^{k+\ell+2\nu-1}}\\
  &\qquad\qquad\qquad\times\left[(r+n)^{\mu-\ell-2\nu+1}P_{k+\ell+2\nu,2-k-\mu}(r,n)-n^{k+\mu-1} \right].
\end{align*}
\end{lemma}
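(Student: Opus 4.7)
My plan is a direct evaluation based on swapping the order of a double integral and then recognizing the resulting polynomial combinatorially. First I rescale $u = 4\pi y$ to absorb all factors of $4\pi$, leaving
\[
J := \int_0^\infty \Gamma(1-k-\mu;\,nu)\,e^{-ru}\,u^{s-1}\,du,\qquad s := k+\ell+2\nu-1 \in \N,
\]
so that $I = (4\pi)^{1-(k+\ell+2\nu)} J$. Writing $\Gamma(1-k-\mu;\,nu) = n^{1-k-\mu}\int_u^\infty v^{-k-\mu}e^{-nv}\,dv$ (via the substitution $t = nv$ in the definition of the upper incomplete Gamma function) and applying Fubini on the region $\{0 < u < v\}$, the inner integral $\int_0^v e^{-ru}u^{s-1}\,du = r^{-s}\gamma(s,rv)$ admits a clean closed form since $s$ is a positive integer: $\gamma(s, rv) = (s-1)!\bigl(1 - e^{-rv}\sum_{j=0}^{s-1}(rv)^j/j!\bigr)$.

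Substituting this and evaluating the remaining Gamma integrals in $v$, using the Pochhammer identity $\Gamma(j+1-k-\mu) = (1-k-\mu)_j\Gamma(1-k-\mu)$, yields
\[
J = \frac{(s-1)!\,\Gamma(1-k-\mu)}{r^s}\Bigl[1 - n^{1-k-\mu}\sum_{j=0}^{s-1}\frac{(1-k-\mu)_j}{j!}\,r^j(n+r)^{k+\mu-j-1}\Bigr].
\]
The decisive pattern-match is that $(1-k-\mu)_j/j! = \binom{j-k-\mu}{j}$ is exactly the $j$th coefficient appearing in $P_{k+\ell+2\nu,\,2-k-\mu}$ from \eqref{eq:P}. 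Pulling $(n+r)^{\mu-\ell-2\nu+1}$ out of the sum — which adjusts the exponent of $(n+r)$ from $k+\mu-j-1$ to $k+\ell+2\nu-j-2$ — identifies it as $(n+r)^{\mu-\ell-2\nu+1}\,P_{k+\ell+2\nu,\,2-k-\mu}(r,n)$. Factoring $-n^{1-k-\mu}$ from the bracket via the identity $n^{(1-k-\mu)+(k+\mu-1)}=1$ and restoring the $(4\pi)^{1-(k+\ell+2\nu)}$ prefactor reproduces the stated formula.

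The main obstacle is that the individual integrals $\int_0^\infty v^{j-k-\mu}e^{-\beta v}\,dv$ only converge when $\Re(j+1-k-\mu)>0$, which typically fails for small $j$ since $k \ge 3/2$. I would circumvent this by first establishing the identity under the auxiliary assumption $\Re(1-k-\mu)>0$, where every step above is termwise absolutely convergent, and then analytically continuing in $k$: the left-hand side $I$ is holomorphic in $k$ throughout the admissible range because $\Gamma(1-k-\mu; 4\pi n y)$ decays exponentially at infinity and behaves like $y^{1-k-\mu}$ near zero, with $\ell+2\nu-\mu > 0$ ensuring integrability of the full integrand. The right-hand side is meromorphic, and the apparent poles of $\Gamma(1-k-\mu)$ at $k+\mu\in\N$ are cancelled by compensating zeros of the bracket — a routine residue check — so equality extends to the full range by the identity theorem.
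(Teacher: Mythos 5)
Your argument is correct, and after the common first step (unfolding the incomplete Gamma function into a double integral and swapping the order by Tonelli) it diverges from the paper's route in an interesting way. The paper integrates out the $y$-variable first, reduces to $\int_0^1 t^{k+\ell+2\nu-2}(rt+n)^{\mu-\ell-2\nu}\,dt$, and evaluates that by writing the integrand as a $(k+\ell+2\nu-2)$-fold $r$-derivative of $(rt+n)^{k+\mu-2}$, cleaning up the Gamma factors with the reflection formula; the final identification with $P_{a,b}$ is left as ``a little computation.'' You instead integrate out the $u$-variable first, exploit that $s=k+\ell+2\nu-1$ is a positive integer to replace $\gamma(s,rv)$ by its elementary closed form, and then the polynomial $P_{k+\ell+2\nu,2-k-\mu}$ appears verbatim from its definition via $(1-k-\mu)_j/j!=\binom{j+b-2}{j}$ — this makes the combinatorial origin of $P_{a,b}$ completely transparent, which is a genuine advantage. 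The price is the termwise divergence of $\int_0^\infty v^{j-k-\mu}e^{-\beta v}\,dv$ for small $j$, which you correctly identify and repair by first proving the identity for $\Re(1-k-\mu)>0$ and continuing analytically in $k$ with $s$ held fixed; since both sides are holomorphic on a connected domain containing the half-integral values of interest (which avoid the poles of $\Gamma(1-k-\mu)$), the identity-theorem step is sound. The paper's route sidesteps this issue entirely (its only convergence requirement is $\ell+2\nu-\mu>0$, which you also invoke), at the cost of the less illuminating differentiation trick. Both proofs are complete; yours trades a small amount of analytic bookkeeping for a cleaner algebraic endgame.
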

\begin{proof}
Written as double integral, $I$ equals
\[\int\limits_0^\infty\int\limits_{4\pi ny}^\infty e^{-t}t^{-k-\mu}e^{4\pi ry}y^{k+\ell+2\nu-2} dtdy.\]
Substituting $4\pi nyt'=t$ this equals
\begin{align*}
 &(4\pi n)^{1-k-\mu}\int\limits_0^\infty\int\limits_1^\infty e^{-4\pi y(r+nt)}t^{-k-\mu}y^{\ell+2\nu-1}dtdy\\ 
=&(4\pi n)^{1-k-\mu}\int\limits_1^\infty t^{-k-\mu}\int\limits_0^\infty e^{-4\pi y(r+nt)}y^{\ell+2\nu-1}dtdy.
\end{align*}
For the inner integral we substitute $4\pi (r+nt)=y'$, hence $dy=(4\pi (r+nt))^{-1}dy'$, and find that
\[I=(4\pi)^{1-(k+\ell+2\nu)}n^{1-k-\mu}\Gamma(\ell+2\nu-\mu)\int\limits_1^\infty t^{-k-\mu}(r+nt)^{\mu-\ell-2\nu}dt,\]
which after yet another substitution $t=\tfrac{1}{t'}$, hence $dt=-\tfrac{1}{t'^2}dt'$, simplifies to
\begin{align*}
 &(4\pi)^{1-(k+\ell+2\nu)}n^{1-k-\mu}\Gamma(\ell+2\nu-\mu)\int\limits_0^1 t^{k+\mu-2}\left(r+\frac nt\right)^{\mu-\ell-2\nu}dt\\
=&(4\pi)^{1-(k+\ell+2\nu)}n^{1-k-\mu}\Gamma(\ell+2\nu-\mu)\int\limits_0^1 t^{k+\ell+2\nu-2}(rt+n)^{\mu-\ell-2\nu}dt.
\end{align*}
Since $k+\ell$ is an integer by assumption, we note
\begin{gather}\label{eq:partialr}
\frac{\partial^{k+\ell+2\nu-2}}{\partial r^{k+\ell+2\nu-2}}(rt+n)^{k+\mu-2}=\frac{\Gamma(k+\mu-1)}{\Gamma(\mu-\ell-2\nu+1)}t^{k+\ell+2\nu-2}(rt+n)^{\mu-\ell-2\nu},
\end{gather}
and thus
\begin{align*}
 & \int\limits_0^1 t^{k+\ell+2\nu-2}(rt+n)^{\mu-\ell-2\nu}dt\\
=&\frac{\Gamma(\mu-\ell-2\nu+1)}{\Gamma(k+\mu-1)}\frac{\partial^{k+\ell+2\nu-2}}{\partial r^{k+\ell+2\nu-2}}\int\limits_0^1(rt+n)^{k+\mu-2}dt\\
=&\frac{\Gamma(\mu-\ell-2\nu+1)}{\Gamma(k+\mu-1)}\frac{\partial^{k+\ell+2\nu-2}}{\partial r^{k+\ell+2\nu-2}} \left[\frac{1}{k+\mu-1}\frac 1r\left((r+n)^{k+\mu-1} -n^{k-\mu-1}\right)\right].
\end{align*}
By using \eqref{eq:partialr} and simplifying the expressions involving Gamma functions using the reflection formula \eqref{eq:Gamma}, the assertion follows after a little computation.
\end{proof}
For a more detailed proof of this, we refer the reader to Lemma V.1.7. in \cite{MertPhD}.
\section{The case of weight $\left(\tfrac 32,\tfrac 12\right)$}\label{sec:weight}
In this section, we work out the explicit formula for weight $\left(\tfrac 32,\tfrac 12\right)$ mixed mock modular form of $\nu$th type which becomes surprisingly simple. For this we will need two preparatory lemmas.
\begin{lemma}\label{lem:PXY}
For $b\neq 1,2$, the polynomial $P_{a,b}(X,Y)$ from \eqref{eq:P} fulfills the following identities.
\begin{align*}
P_{a,b}(X,Y)&=\sum\limits_{j=0}^{a-2} {{a+b-3} \choose j}X^jY^{a-2-j}\\
            &=\sum\limits_{j=0}^{a-2} {{a+b-3} \choose {a-2-j}}{{j+b-2} \choose j} (X+Y)^{a-2-j}(-Y)^j.
            \end{align*}
\end{lemma}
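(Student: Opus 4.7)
The plan is to prove the two identities in succession: the first by expanding the binomial $(X+Y)^{a-j-2}$ in the definition \eqref{eq:P} and reducing to a Chu--Vandermonde-type convolution, and the second by applying a change of basis to the result of the first.

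First I will establish the binomial identity
\[\sum_{j=0}^{m}\binom{j+b-2}{j}\binom{a-j-2}{m-j}=\binom{a+b-3}{m}.\]
Expanding $(X+Y)^{a-j-2}$ by the binomial theorem in the definition of $P_{a,b}$ and reorganizing the double sum by total degree in $X$ shows that the coefficient of $X^{m}Y^{a-2-m}$ in $P_{a,b}(X,Y)$ is exactly the left-hand side of this convolution, so the first identity of the lemma is equivalent to it. The convolution itself is classical; I would prove it either by a short induction on $a$ using Pascal's rule $\binom{a-j-2}{m-j}=\binom{a-j-3}{m-j}+\binom{a-j-3}{m-j-1}$, or by combining the formal generating series $\sum_{j\ge 0}\binom{j+b-2}{j}z^{j}=(1-z)^{-(b-1)}$ with $(1+u)^{a+b-3}=(1+u)^{a-2}(1+u)^{b-1}$ via the substitution $z=u/(1+u)$.

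For the second identity I will use the first identity and change variables by setting $W:=X+Y$ (so that $X=W-Y$). Substituting and expanding $(W-Y)^{m}=\sum_{i=0}^{m}\binom{m}{i}W^{m-i}(-Y)^{i}$, then reindexing by setting $p:=m-i$ and $j:=a-2-p$, I will collect the coefficient of $W^{a-2-j}Y^{j}$, which forms a legitimate monomial basis for polynomials of homogeneous degree $a-2$ in $W$ and $Y$. This coefficient equals
\[\sum_{i=0}^{j}\binom{a+b-3}{a-2-j+i}\binom{a-2-j+i}{i}(-1)^{i}.\]
To evaluate this I will first apply the ``subset of a subset'' identity $\binom{A}{K+i}\binom{K+i}{i}=\binom{A}{K}\binom{A-K}{i}$ with $A=a+b-3$ and $K=a-2-j$, which factors out $\binom{a+b-3}{a-2-j}$ and leaves a partial alternating sum $\sum_{i=0}^{j}\binom{j+b-1}{i}(-1)^{i}$. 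The standard identity $\sum_{i=0}^{j}\binom{n}{i}(-1)^{i}=(-1)^{j}\binom{n-1}{j}$ then collapses this to $(-1)^{j}\binom{j+b-2}{j}$. Absorbing the sign $(-1)^j$ into the factor $(-Y)^{j}$ yields the claimed coefficient $\binom{a+b-3}{a-2-j}\binom{j+b-2}{j}$ of $(X+Y)^{a-2-j}(-Y)^{j}$.

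The main obstacle is nothing beyond careful index bookkeeping in the two rearrangement steps; all three binomial identities invoked (Chu--Vandermonde convolution, subset-of-a-subset, and the partial alternating sum) are completely standard. The hypothesis $b\neq 1,2$ plays only a cosmetic role, ensuring that the binomial coefficients $\binom{j+b-2}{j}$ and $\binom{j+b-1}{i}$ appearing in the partial-sum step remain non-degenerate and no collapse obscures the argument.
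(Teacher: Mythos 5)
Your proposal is correct, and it takes a genuinely different route from the paper, most notably for the second identity. For the first identity the paper argues by induction on $a$ directly on the defining sum \eqref{eq:P}, whereas you extract the coefficient of $X^mY^{a-2-m}$ and reduce to the Vandermonde-type convolution $\sum_{j}\binom{j+b-2}{j}\binom{a-j-2}{m-j}=\binom{a+b-3}{m}$; the two arguments are close in substance (both ultimately rest on Pascal's rule), just organized differently, and your generating-function variant $(1-z)^{-(b-1)}\cdot(1+u)^{a-2}$ with $z=u/(1+u)$ is a clean way to package it. For the second identity the difference is more substantial: the paper runs a second induction on $a$ built around an integration-with-respect-to-$Y$ step that shifts $b$ to $b+1$ and a comparison of constant terms (this is where the hypothesis $b\neq 2$ is genuinely used, to divide by $b-2$), while you derive the second identity directly from the first by re-expanding in the basis $(X+Y)^{a-2-j}Y^{j}$ and collapsing the resulting coefficient sum with the subset-of-a-subset identity and the partial alternating sum $\sum_{i=0}^{j}(-1)^{i}\binom{n}{i}=(-1)^{j}\binom{n-1}{j}$. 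This is shorter, avoids induction entirely for the second identity, and in fact dispenses with the hypothesis $b\neq 1,2$ altogether, since every identity you invoke is a polynomial identity in the upper index and hence valid for arbitrary complex $b$. The one point worth making explicit in a write-up is precisely that: since $b$ is a half-integer in the intended application ($b=2-k-\mu$ with $k\in\tfrac12+\Z$), all binomial coefficients with upper entry involving $b$ must be read in the generalized (Gamma-function or falling-factorial) sense, and you should note that Pascal's rule, the subset-of-a-subset identity $\binom{A}{K+i}\binom{K+i}{i}=\binom{A}{K}\binom{A-K}{i}$, and the alternating partial sum all persist in that generality because both sides are polynomials in the upper index agreeing on infinitely many integers.
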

\begin{proof}
We first prove the first identity by induction on $a$. 

For $a=2$ we have $1=1$.

Assume the claim to be true for one $a\geq 2$, then we get for $a+1$:
{\allowdisplaybreaks
\begin{align*}
P_{a+1,b}(X,Y)&=\sum\limits_{j=0}^{a-1} {{j+b-2} \choose j} X^j(X+Y)^{a-2-j+1}\\
              &\overset{IV}{=}(x+y)\sum\limits_{j=0}^{a-2} {{a+b-3} \choose j}X^jY^{a-2-j}+{{a+b-3} \choose {a-1}}X^{a-1}\\
              &=\sum\limits_{j=0}^{a-2} {{a+b-3} \choose j}X^{j+1}Y^{a-2-j}+\sum\limits_{j=0}^{a-2} {{a+b-3} \choose j}X^jY^{a+1-j-2}\\
              &\qquad\qquad\qquad +{{a+b-3} \choose a-1}X^{a-1}\\
              &=\sum\limits_{j=1}^{a-1} {{a+b-3} \choose {j-1}}X^{j}Y^{a-1-j}+\sum\limits_{j=0}^{a-2} {{a+b-3} \choose j}X^jY^{a-j-1}\\
              &\qquad\quad\qquad +{{a+b-3} \choose a-1}X^{a-1}\\
              &=\sum\limits_{j=1}^{a-2} \left[{{a+b-3} \choose {j-1}}+{{a+b-3} \choose j}\right]X^{j}Y^{a-1-j}\\
              &\qquad\qquad\qquad +Y^{a-1}+\left[{{a+b-3} \choose a-1}+{{a+b-3} \choose {a-2}}\right]X^{a-1}\\
              &=\sum\limits_{j=0}^{a-1} {{a+b-2} \choose {j}}X^{j}Y^{a-1-j},
\end{align*}  
}
hence the first equation follows.

Now we show that
\[P_{a,b}(X,Y)=\sum\limits_{j=0}^{a-2} {{a+b-3} \choose {a-2-j}}{{j+b-2} \choose j} (X+Y)^{a-2-j}(-Y)^j,\]
again by induction on $a$. 

The case $a=2$ again yields $1=1$.

Suppose the equality to show holds for some $a\geq 2$ (Z:=X+Y):
\[\sum\limits_{j=0}^{a-2} {{j+b-2} \choose j}(Z-Y)^jZ^{a-2-j}=\sum\limits_{j=0}^{a-2}{{a+b-3} \choose {a-2-j}}{{j+b-2} \choose j} Z^{a-2-j}(-Y)^j.\]
Integration with respect to $Y$ gives ($C$ some constant to be determined later):
\begin{align*}
&-\sum\limits_{j=0}^{a-2} {{j+b-2} \choose j}\frac{1}{j+1}(Z-Y)^{j+1}Z^{a-2-j}\\
&\qquad\qquad\qquad =C+\sum\limits_{j=0}^{a-2}{{a+b-3} \choose {a-2-j}}{{j+b-2} \choose j} Z^{a-2-j}\frac{(-1)^j}{j+1}Y^{j+1}.
\end{align*}
Since by assumption we have $b\neq 2$, one easily sees that
\[\frac{1}{j+1}{{j+b-2} \choose j}=\frac{((j+1)+(b-1)-2)!}{(j+1)!(b-2)!}=\frac{1}{b-2}{{j+b-2} \choose {j+1}},\]
and therefore the above is equivalent to (replacing $b$ by $b+1$)
\begin{align*}
&\sum\limits_{j=1}^{a-1} {{j+b-2} \choose j}(Z-Y)^jZ^{a+1-j-2}\\
&\qquad\qquad\qquad =C'+\sum\limits_{j=1}^{a-1}{{a+1+b-3} \choose {a+1-2-j}}{{j+b-2} \choose j}Z^{a+1-j-2}(-1)^jY^j.
\end{align*}
If we let both sums start at $j=0$, then we just add constant terms in $Y$, thus comparison of the constant terms yields the assertion. Hence we have to prove 
\[Z^{a-1}\sum\limits_{j=0}^{a-1}{{j+b-2} \choose j}=Z^{a-1}{{a+b-2} \choose {a-1}},\]
which follows from the first identity we showed by plugging in $X=1$ and $Y=0$.
\end{proof}
\begin{lemma}\label{lem:binomial}
\begin{enumerate}[(i)]
\item The following identity is valid for $\nu>0$,
\[\sum\limits_{\mu=0}^\nu \frac{(-1)^{\mu}}{\mu-j+\frac 12}{{4\nu-2\mu-1} \choose {2(\nu-\mu),2\nu-\mu-1}}=2^{4\nu}(-1)^j \frac{(2\nu-j)!j!}{(2j)!(2(\nu-j)+1)!}.\]
\item For $\kappa$ from \eqref{eq:kappa} we have for all $\nu\geq 0$ that
\[\kappa\left(\frac 32,\frac 12,\nu\right)=2^{1-2\nu}\sqrt{\pi}{{2\nu} \choose \nu}.\]
\end{enumerate}
\end{lemma}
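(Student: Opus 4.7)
Parts (i) and (ii) are not independent: the $j = 0$ instance of (i) is exactly what evaluates the sum that defines $\kappa(\tfrac32,\tfrac12,\nu)$, so I would prove (i) first. The key idea for (i) is to realize the sum as a generalized hypergeometric series ${}_3F_2$ at argument $1$ and apply the Pfaff--Saalsch\"utz identity \eqref{eq:Pfaff}.

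For part (i), I would pull out the $\mu = 0$ summand
\[
c_0 = \frac{(4\nu - 1)!}{(\tfrac12 - j)(2\nu)!(2\nu - 1)!},
\]
compute the ratio $c_{\mu+1}/c_\mu$, use $4\nu - 2\mu - 2 = 2(2\nu - \mu - 1)$ to cancel one factor, and rewrite every linear factor in the form $(\alpha + \mu)$. The three negations coming from $2\nu - 2\mu$, $2\nu - 2\mu - 1$, and $4\nu - 2\mu - 1$ combine with the explicit $(-1)$ in the term ratio to yield an overall $+1$, so the sum becomes
\[
c_0 \cdot {}_3F_2\left(\begin{matrix} \tfrac12 - j,\, -\nu,\, \tfrac12 - \nu \\ \tfrac32 - j,\, \tfrac12 - 2\nu \end{matrix}; 1\right).
\]
Since $\nu > 0$ the upper parameter $-\nu$ is a negative integer, the Saalsch\"utzian balance holds by direct inspection, and hence \eqref{eq:Pfaff} applies. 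A short cleanup using the reflection formula \eqref{eq:Gamma} on $\Gamma(\tfrac12 - j)$ and the duplication formula \eqref{eq:Legendre} on the half-integer Gamma values in the four resulting Pochhammer symbols collapses the powers of $4$ into $2^{4\nu}$ and lines up the factorials with the stated right-hand side.

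For part (ii), the case $\nu = 0$ is immediate from \eqref{eq:kappa}: only the $\mu = 0$ summand survives and it evaluates to $2\sqrt{\pi} = 2^{1-0}\sqrt{\pi}\binom{0}{0}$. For $\nu > 0$, I would substitute $k = \tfrac32$ and $\ell = \tfrac12$ in \eqref{eq:kappa}, use \eqref{eq:Gamma} to rewrite $\Gamma(\tfrac12)/\Gamma(\tfrac12 - \mu)$ as $(-1)^\mu \Gamma(\tfrac12 + \mu)/\sqrt{\pi}$, and expand every remaining half-integer Gamma value through \eqref{eq:Legendre}. The product of the two binomial coefficients then collapses to
\[
\binom{\tfrac12 + \nu}{\nu - \mu}\binom{-\tfrac12 + \nu}{\mu} = \frac{(2\nu + 1)!(2\nu)!}{4^\nu (\nu!)^2 (2\mu + 1)!(2\nu - 2\mu)!},
\]
and applying $(4\nu - 2\mu)!/[(2\nu - \mu)!(2\nu - 2\mu)!] = 2(4\nu - 2\mu - 1)!/[(2\nu - \mu - 1)!(2\nu - 2\mu)!]$ together with $2\mu + 1 = 2(\mu + \tfrac12)$ turns the remaining $\mu$-sum into exactly the left-hand side of (i) at $j = 0$, whose value is $2^{4\nu}/(2\nu + 1)$ by (i). Cancelling $(2\nu + 1)$ against $(2\nu + 1)!/(2\nu)!$ then yields the claimed closed form $2^{1 - 2\nu}\sqrt{\pi}\binom{2\nu}{\nu}$.

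The main obstacle lies entirely in part (i): one might expect the hypergeometric realization to live at $x = -1$ because of the explicit $(-1)^\mu$, but in fact three further negations from the multinomial coefficient conspire to produce $x = +1$. Identifying this correctly is what makes \eqref{eq:Pfaff} applicable and hence what enables a closed-form evaluation at all. Once this sign check is in place, both parts reduce to straightforward bookkeeping with Pochhammer symbols and half-integer Gamma values.
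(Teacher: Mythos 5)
Your proof of (i) is exactly the paper's argument: the same ${}_3F_2$ with upper parameters $-\nu,\tfrac12-j,\tfrac12-\nu$ and lower parameters $\tfrac32-j,\tfrac12-2\nu$ at argument $+1$ (your sign bookkeeping is correct), evaluated by Pfaff--Saalsch\"utz and cleaned up with the duplication and reflection formulas. For (ii) the paper just repeats the same hypergeometric evaluation and defers the details to the thesis, whereas your reduction to the $j=0$ instance of (i) is a valid and slightly more economical alternative: the $\mu$-sum you arrive at is indeed the left-hand side of (i) at $j=0$, with value $2^{4\nu}/(2\nu+1)$, and the remaining cancellation gives the stated closed form.
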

\begin{proof}
We first prove $(i)$. Write
\[\sum\limits_{\mu=0}^\nu \frac{(-1)^{\mu}}{\mu-j+\frac 12}{{4\nu-2\mu-1} \choose {2(\nu-\mu),2\nu-\mu-1}}=\sum\limits_{\mu=0}^\nu c_\mu\]

By a standard computation it is easy to see that $\sum c_\mu$ is a multiple of a hypergeometric series,
\[\sum\limits_{\mu=0}^\nu c_\mu=\frac{1}{-j+\frac 12}{{4\nu-1} \choose {2\nu}}{}_3F_2\left(\begin{matrix} -\nu,-j+\frac 12,-\nu+\frac{1}{2} \\ -j+\frac 32, -2\nu+\frac 12\end{matrix} ; 1\right),\]
which by the Pfaff-Saalsch\"utz identity \eqref{eq:Pfaff} equals
\begin{align*}
 &\frac{1}{-j+\frac 12}{{4\nu-1} \choose {2\nu}}\frac{(1)_\nu(\nu-j+1)_\nu}{\left(-j+\frac 32\right)_\nu\left(\nu+\frac 12\right)_\nu}\\
=&{{4\nu-1} \choose {2\nu}}\nu !\frac{(2\nu-j)!}{(\nu-j)!}\frac{\Gamma\left(-j+\frac 12\right)\Gamma\left(\nu+\frac 12\right)}{ \Gamma\left(\nu-j+\frac 32\right)\Gamma\left(2\nu+\frac 12\right)}.
\end{align*}
Simplifying this a little further yields the assertion.

The proof of $(ii)$ works in the very same way, so we omit it here. It is carried out in Lemma V.2.6 in \cite{MertPhD}.
\end{proof}
We can now prove the following result.
\begin{proposition}\label{prop:Pmn}
Let $r=m-n$. Then it holds that
\[\sum\limits_{\mu=0}^\nu {{\nu+\frac 12} \choose {\nu-\mu}}{{\nu-\frac 12} \choose \mu}\left(m^{\frac 12-\nu} P_{2\nu+2,\frac 12-\mu}(r,n)-n^{\frac 12+\mu}m^{\nu-\mu}\right)=2^{-2\nu}{{2\nu} \choose \nu}(m^{\frac 12}-n^{\frac 12})^{2\nu+1}.\]
\end{proposition}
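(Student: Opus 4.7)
The plan is to verify the claimed polynomial identity in $m^{1/2}$ and $n^{1/2}$ by expanding both sides in monomials and matching coefficients, with the two preparatory Lemmas \ref{lem:PXY} and \ref{lem:binomial}(i) doing the heavy combinatorial work.

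First I would use the first identity of Lemma \ref{lem:PXY} (applied with $a=2\nu+2$, $b=\tfrac12-\mu$, $X=r$, $Y=n$) to rewrite
\[P_{2\nu+2,\frac12-\mu}(r,n)=\sum_{j=0}^{2\nu}\binom{2\nu-\mu-\tfrac32}{j}\,r^{j}\,n^{2\nu-j}.\]
Substituting $r=m-n$ and expanding $r^{j}$ by the binomial theorem turns the bracketed expression $m^{1/2-\nu}P_{2\nu+2,1/2-\mu}(r,n)-n^{1/2+\mu}m^{\nu-\mu}$ into a sum of monomials $m^{a/2}n^{b/2}$ (with $a+b=2\nu+1$, at least generically). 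After swapping the order of summation so that the outer sum runs over these monomials, the coefficient of each $m^{(2\nu+1-k)/2}n^{k/2}$ becomes a sum over $\mu$ of products of binomial coefficients. Rewriting $\binom{2\nu-\mu-3/2}{j}$ so as to split off a factor $1/(\mu-j+\tfrac12)$ from its Pochhammer numerator, this inner sum over $\mu$ takes exactly the form evaluated in Lemma \ref{lem:binomial}(i).

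On the right-hand side, I simply expand
\[2^{-2\nu}\binom{2\nu}{\nu}(m^{1/2}-n^{1/2})^{2\nu+1}=2^{-2\nu}\binom{2\nu}{\nu}\sum_{k=0}^{2\nu+1}\binom{2\nu+1}{k}(-1)^{k}m^{(2\nu+1-k)/2}n^{k/2},\]
and match term by term. The even-$k$ contributions on the right come from the $m^{1/2-\nu}P$ piece on the left (which produces integer powers of $n$), while the odd-$k$ contributions are captured once the subtracted $n^{1/2+\mu}m^{\nu-\mu}$ terms are folded in. After the inner $\mu$-sums are collapsed by Lemma \ref{lem:binomial}(i) and the resulting Gamma factors are cleaned up using the duplication and reflection formulas \eqref{eq:Legendre} and \eqref{eq:Gamma}, each coefficient reduces to $2^{-2\nu}\binom{2\nu}{\nu}\binom{2\nu+1}{k}(-1)^{k}$, as required.

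The main obstacle is the bookkeeping around "anomalous" monomials: a naive expansion of $m^{1/2-\nu}P_{2\nu+2,1/2-\mu}(r,n)$ produces terms with negative powers of $m^{1/2}$ (as one sees already in the $\nu=1$ check, where a $n^{2}/m^{1/2}$ appears and must disappear in the end), and any monomial $m^{a/2}n^{b/2}$ with $a+b>2\nu+1$ coming from the double expansion must also cancel. Both cancellations are instances of Lemma \ref{lem:binomial}(i) applied to indices $j$ that lie outside the "polynomial range" $0\le j\le\nu$, where the right-hand side of that lemma vanishes. Once these degenerate cases are correctly identified and the index shifts between $\mu$, $j$, and $k$ are reconciled, the rest of the proof is the direct but somewhat tedious verification outlined above.
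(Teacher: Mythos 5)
Your overall strategy---expand both sides into monomials $m^{a/2}n^{b/2}$ and collapse the resulting $\mu$-sums with \Cref{lem:binomial}$(i)$---is exactly the paper's, but the specific route you choose creates a gap. You apply the \emph{first} identity of \Cref{lem:PXY} to write $P_{2\nu+2,\frac12-\mu}(r,n)$ in powers of $r$ and $n$, and then re-expand $r^j=(m-n)^j$. This produces, for each monomial, a sum over \emph{two} indices (the index $j$ from the $P$-expansion and the index from expanding $r^j$) in addition to the sum over $\mu$, and your outline never collapses the extra one. More importantly, the claim that the inner $\mu$-sum then ``takes exactly the form evaluated in \Cref{lem:binomial}$(i)$'' is not correct as described: the crucial factor $\tfrac{1}{\mu-j+\frac12}$ in that lemma arises (via the reflection formula \eqref{eq:Gamma}) from the \emph{second} binomial coefficient ${j+b-2\choose j}$ appearing in the \emph{second} identity of \Cref{lem:PXY}, not from the single coefficient ${a+b-3\choose j}$ of the first identity --- one cannot ``split off'' a reciprocal factor from a product sitting in a numerator. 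The paper avoids all of this by using the second identity of \Cref{lem:PXY} directly: since $r+n=m$, it lands immediately on monomials $m^{\nu-j+\frac12}(-n)^j$ with coefficient ${2\nu-\mu-\frac12\choose 2\nu-j}{j-\mu-\frac32\choose j}$, whose product simplifies to the form required by \Cref{lem:binomial}$(i)$. Your gap is fillable --- either switch to the second identity, or separately evaluate the alternating sum $\sum_{j}{2\nu-\mu-\frac12\choose j}{j\choose i}(-1)^{j-i}$, which is essentially a re-derivation of that second identity --- but as written the reduction to \Cref{lem:binomial}$(i)$ does not go through.

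Two smaller points. First, the parameter in your application of the first identity is off: with $a=2\nu+2$ and $b=\frac12-\mu$ one gets ${2\nu-\mu-\frac12\choose j}$, not ${2\nu-\mu-\frac32\choose j}$. Second, your worry about monomials with total degree exceeding $2\nu+1$ is unfounded: $P_{2\nu+2,\frac12-\mu}(r,n)$ is homogeneous of degree $2\nu$ in $(r,n)$ and $r=m-n$ is homogeneous of degree $1$ in $(m,n)$, so every monomial produced has total half-degree exactly $2\nu+1$. Your identification of the negative-power cancellations (indices $j>\nu$) with the vanishing of the right-hand side of \Cref{lem:binomial}$(i)$ outside the range $0\le j\le\nu$ is, on the other hand, correct and matches what happens implicitly in the paper's computation.
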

\begin{proof}
For $\nu=0$, the identity is immediate, thus assume from now on $\nu\geq 1$.

By applying Legendre's duplication formula \eqref{eq:Legendre} several times we get
{\allowdisplaybreaks
\begin{align*}
 &{{\nu+\frac 12} \choose {\nu-\mu}}{{\nu-\frac 12} \choose \mu}=2^{-2\nu}{{2\nu} \choose \nu}{{2\nu+1} \choose {2\mu+1}}.
\end{align*}
}
On the one hand we have
\[(m^{\frac{1}{2}}-n^{\frac 12})^{2\nu+1}=\sum\limits_{\mu=0}^\nu {{2\nu+1} \choose {2\mu}}m^{\nu-\mu+\frac 12}n^\mu -\sum\limits_{\mu=0}^\nu{{2\nu+1} \choose {2\mu+1}}m^{\nu-\mu}n^{\mu+\frac 12},\]
on the other we see by \Cref{lem:PXY} that
\begin{align*}
 &\sum\limits_{\mu=0}^\nu {{2\nu+1} \choose {2\mu+1}}\left(m^{\frac 12-\nu} P_{2\nu+2,\frac 12-\mu}(r,n)-n^{\frac 12+\mu}m^{\nu-\mu}\right)\\
=&\sum\limits_{\mu=0}^\nu {{2\nu+1} \choose {2\mu+1}}\sum\limits_{j=0}^{2\nu} {{2\nu-\mu-\frac 12} \choose {2\nu-j}}{{j-\mu-\frac 32} \choose j}m^{\nu-j+\frac 12}(-n)^j\\
 &\qquad\qquad\qquad\qquad-\sum\limits_{\mu=0}^\nu {{2\nu+1} \choose {2\mu+1}}n^{\frac 12+\mu}m^{\nu-\mu}.
\end{align*}
Thus it remains to show the following identity.
\begin{gather}\label{eq:idsum}
\sum\limits_{\mu=0}^\nu {{2\nu+1} \choose {2\mu+1}}\sum\limits_{j=0}^{2\nu} {{2\nu-\mu-\frac 12} \choose {2\nu-j}}{{j-\mu-\frac 32} \choose j}m^{\nu-j+\frac 12}(-n)^j=\sum\limits_{\mu=0}^\nu {{2\nu+1} \choose {2\mu}}m^{\nu-\mu+\frac 12}n^\mu.
\end{gather}
Again, we first simplify the product of the binomial coefficients in the inner sum. By \eqref{eq:Legendre} and \eqref{eq:Gamma} we find that
{\allowdisplaybreaks
\begin{align*}
 &{{2\nu-\mu-\frac 12} \choose {2\nu-j}}{{j-\mu-\frac 32} \choose j}\\
=&\frac{(-1)^{\mu+1}}{j-\mu-\frac 12}2^{-4\nu}\frac{(4\nu-2\mu-1)!(2\mu+1)!}{(2\nu-\mu-1)!(2\nu-j)!j!\mu!}.
\end{align*}
}
Now we have a look at the left-hand side of \eqref{eq:idsum}.
\begin{align*}
 &\sum\limits_{\mu=0}^\nu {{2\nu+1} \choose {2\mu+1}}\sum\limits_{j=0}^{2\nu} {{2\nu-\mu-\frac 12} \choose {2\nu-j}}{{j-\mu-\frac 32} \choose j}m^{2\nu-j}(-n)^j\\
=&2^{-4\nu}\sum\limits_{j=0}^{2\nu}{{2\nu+1} \choose {2j}}\frac{(2j)!(2(\nu-j)+1)!}{2(\nu-j)!j!}m^{2\nu-j}(-n)^j\\
 &\qquad\qquad\qquad\times\sum\limits_{\mu=0}^\nu \frac{(-1)^{\mu+1}}{j-\mu-\frac 12} {{4\nu-2\mu-1} \choose {2(\nu-\mu),2\nu-\mu-1,\mu}}.
\end{align*}
By \Cref{lem:binomial}$(i)$ we see that \eqref{eq:idsum} is valid and so is our Proposition.
\end{proof}
From the preceding proposition we can now prove our first main result.
\begin{theorem}\label{theo:main32}
Let $f\in\mathcal{M}_{\frac 32}(\Gamma)$ and $g\in M_{\frac 12}(\Gamma)$ with $\Gamma=\Gamma_1(4N)$ for some $N\in\N$ and fix $\nu\in\N_0$. Then there is a finite linear combination $L_\nu^{f,g}$ of functions of the form
\begin{align*}
\Lambda_{s,t}^{\chi,\psi}(\tau;\nu)=&\sum\limits_{r=1}^\infty \left(2\sum\limits_{\substack{sm^2-tn^2=r \\ m,n\geq 1}} \chi(m)\overline{\psi(n)}(\sqrt{s}m-\sqrt{t}n)^{2\nu+1}\right)q^r\\
                                &\qquad\qquad\qquad +\overline{\psi(0)}\sum\limits_{r=1}^\infty \chi(r)(\sqrt{s}r)^{2\nu+1}q^{sr^2}
\end{align*}
where $s,t\in\N$ and $\chi,\psi$ are even characters of conductors $F(\chi)$ and $F(\psi)$ respectively with $sF(\chi)^2,tF(\psi)^2|N$, such that 
\[[f,g]_\nu+L_\nu^{f,g}\]
is a (holomorphic) quasi-modular form of weight $2$ if $\nu=0$ or otherwise a holomorphic modular form of weight $2\nu+2$.
\end{theorem}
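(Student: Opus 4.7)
The plan is to compute $\pihol([f,g]_\nu)$ explicitly using the decomposition \eqref{eq:piholRCB}, and to read off $L_\nu^{f,g}$ as (minus) the two ``correction'' terms. Since the total weight is $k+\ell+2\nu=2+2\nu$, Proposition \ref{prop:pihol2}(ii) guarantees that $\pihol([f,g]_\nu)$ is a holomorphic quasi-modular form of weight $2$ when $\nu=0$ and a holomorphic modular form of weight $2\nu+2$ otherwise, on the same group $\Gamma_1(4N)$; the required (quasi-)modularity of $[f,g]_\nu+L_\nu^{f,g}$ will then be automatic.

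Specialising \eqref{eq:piholRCB} to $k=\tfrac 32$, $\ell=\tfrac 12$, Lemma \ref{lem:piholy} combined with the evaluation $\kappa(\tfrac 32,\tfrac 12,\nu)=2^{1-2\nu}\sqrt{\pi}\binom{2\nu}{\nu}$ from Lemma \ref{lem:binomial}(ii) rewrites the middle summand as
\[2^{1-2\nu}\sqrt{\pi}\binom{2\nu}{\nu}\,\overline{c_f^-(0)}\sum_{n\geq 0}n^{\nu+\tfrac 12}a_g(n)q^n.\]
For the third summand I would substitute \eqref{eq:piholfm}, use $\Gamma(-\tfrac 12)=-2\sqrt{\pi}$, and invoke Proposition \ref{prop:Pmn} to collapse the inner sum over $\mu$; the result is
\[\pihol([f^-,g]_\nu)=2^{1-2\nu}\sqrt{\pi}\binom{2\nu}{\nu}\sum_{r\geq 1}\biggl(\sum_{\substack{m-n=r\\ n\geq 1}}a_g(m)\overline{c_f^-(n)}\bigl(\sqrt{m}-\sqrt{n}\bigr)^{2\nu+1}\biggr)q^r.\]
Crucially, both correction terms carry the same explicit constant $2^{1-2\nu}\sqrt{\pi}\binom{2\nu}{\nu}$, which is the reason the final answer will fit the $\Lambda$-template.

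The proof is then completed by the Serre-Stark theorem \cite[Theorem A]{SS77}: since both $g$ and the shadow $\xi_{\tfrac 32}f$ belong to $M_{\tfrac 12}(\Gamma_1(4N))$, they are finite linear combinations of theta series $\theta_{\chi,s}(\tau)=\sum_{n\in\Z}\chi(n)q^{sn^2}$ over even primitive characters $\chi$ with $sF(\chi)^2\mid N$. Writing $g=\sum_i\alpha_i\theta_{\chi_i,s_i}$ and $\xi_{\tfrac 32}f=\sum_j\beta_j\theta_{\psi_j,t_j}$, the bilinear sum $\sum a_g(m)\overline{c_f^-(n)}(\sqrt m-\sqrt n)^{2\nu+1}$ becomes, upon substituting $m=s_iM^2$ and $n=t_jN^2$ with $M,N\geq 1$, exactly the first sum in $\Lambda_{s_i,t_j}^{\chi_i,\psi_j}(\tau;\nu)$. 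Expanding $\overline{c_f^-(0)}$ through the same theta decomposition picks out only those $j$ for which $\psi_j$ is trivial, and the middle correction thus reproduces the second summand $\overline{\psi(0)}\sum_r\chi(r)(\sqrt{s}r)^{2\nu+1}q^{sr^2}$ of $\Lambda_{s,t}^{\chi,\psi}(\tau;\nu)$. The main obstacle is the bookkeeping required to align constants, characters and levels: tracking the normalising factor $-(4\pi)^{\tfrac 12}$ from the formula for $\xi_{\tfrac 32}$ in Section 3 (relating $c_f^-(n)$ to the theta coefficients of $\xi_{\tfrac 32}f$), the factor of $2$ arising from the evenness $\chi(-n)=\chi(n)$, and the sign from $\Gamma(-\tfrac 12)$. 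Once Proposition \ref{prop:Pmn} is granted, this alignment constitutes essentially the whole content of the proof.
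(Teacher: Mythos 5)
Your outline follows the paper's proof almost step for step: reduce $g$ and $\xi_{3/2}f$ to unary theta series via Serre--Stark, push the decomposition \eqref{eq:piholRCB} through \Cref{lem:piholy}, \Cref{lem:binomial}$(ii)$, \eqref{eq:piholfm} and \Cref{prop:Pmn}, and observe that the two correction terms carry the same constant $2^{1-2\nu}\sqrt{\pi}\binom{2\nu}{\nu}$ so that they assemble into the single template $\Lambda_{s,t}^{\chi,\psi}$ (the $\overline{\psi(0)}$-piece being exactly the $n=0$ contribution). That part is correct and is what the paper does.

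The genuine gap is that you never justify convergence of the inner sums
\[\sum_{\substack{sm^2-tn^2=r\\ m,n\geq 1}}\chi(m)\overline{\psi(n)}\bigl(\sqrt{s}m-\sqrt{t}n\bigr)^{2\nu+1},\]
equivalently the standing hypothesis under which \eqref{eq:piholfm} was derived, namely absolute convergence of the integral defining $\pihol([f^-,g]_\nu)$, which is what licenses the interchange of summation and integration. In general (already for $s=1$ and $t>1$ squarefree) the equation $sm^2-tn^2=r$ has infinitely many solutions $(m,n)\in\N^2$, so the coefficient sum is genuinely infinite and ``once \Cref{prop:Pmn} is granted the rest is bookkeeping'' is not quite true. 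The paper devotes the entire second half of its proof to exactly this point: all integer solutions of the Pell-type equation lie in finitely many orbits $\pm(m_0+\sqrt{t}n_0)\eps^k$ under the fundamental unit $\eps>1$, only negative powers of $\eps$ produce solutions with $m,n\geq 1$, and along each orbit the factor $\sqrt{s}m-\sqrt{t}n$ decays geometrically, so the sum is dominated by a convergent geometric series $\sum_{k\geq 0}\eps^{-k(2\nu+1)}$. You need this estimate (or an equivalent one) both to know that $\Lambda_{s,t}^{\chi,\psi}$ is a well-defined $q$-series and to make the formal application of $\pihol$ legitimate; without it the proof is incomplete.
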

\begin{proof}
By assumption both $(\xi f)$ and $g$ are holomorphic modular forms of weight $\tfrac 12$, hence by the Theorem of Serre-Stark (\cite{SS77}, Theorem A) linear combinations of unary theta functions, i.e. functions of the form
\[\vartheta_{s,\chi} (\tau)=\sum\limits_{n\in\Z} \chi(n)q^{sn^2},\]
where $s,\chi$ fulfill the conditions given in our Theorem. Thus we may assume without loss of generality that $(\xi f)$ and $g$ are in fact unary theta series, say $(\xi f)=\vartheta_{t,\psi}$ and $g=\vartheta_{s,\chi}$. By formally using \Cref{prop:Pmn}, \Cref{lem:piholy}, and \Cref{lem:binomial} $(ii)$ inside \eqref{eq:piholfm} we immediately get up to a constant factor of 
\begin{gather}\label{eq:const}
4^{1-\nu}{{2\nu} \choose \nu}\sqrt{\pi}.
\end{gather}
the formula for $\Lambda_{s,t}^{\chi,\psi}$ that we stated in the Theorem.

To complete the proof, we have to check that the sum
\[\sum\limits_{\substack{sm^2-tn^2=r \\ m,n\geq 1}} \chi(m)\overline{\psi(n)}(\sqrt{s}m-\sqrt{t}n)^{2\nu+1}\]
for the coefficients converges. If $s$ and $t$ are both perfect squares, the sum is actually finite, since then $sm^2-tn^2$ factors in the rational integers and thus each summand is a power of a divisor of $r$ of which there are but finitely many. Let us now assume for simplicity that $s=1$ and $t$ is square-free, the general case works essentially in the same way. The Pell type equation 
\begin{gather}\label{eq:Pell}
m^2-tn^2=r
\end{gather}
is well-known to have at most finitely many fundamental integer solutions: if $(a,b)$ is the fundamental solution of $a^2-tb^2=1$ (i.e. $\eps:=a-\sqrt{t}b>1$), then there is a solution $(m_0,n_0)$ of \eqref{eq:Pell} such that $|m_0|\leq\tfrac{\eps+1}{2\sqrt{\eps}}\sqrt{r}$ and $n_0\leq\sqrt{\tfrac{m^2-r}{t}}$. In particular, there are only finitely many such so-called fundamental solutions. Then all solutions $(m,n)$ in $\Z$ of \eqref{eq:Pell} satisfy
\[m+\sqrt{t}n=\pm(m_0+\sqrt{t}n_0)\cdot \eps^k\]
for one $k\in\Z$. We are interested in solutions $(m,n)$ in $\N$. It is plain that such a solution exists, provided that there is one in $\Z$. Furthermore, we see immediately, that the power of the fundamental unit $\eps$ has to be negative to parametrize all possible solutions. In particular this means that
\begin{align*}
&\left|\sum\limits_{\substack{m^2-tn^2=r \\ m,n\geq 1}} \chi(m)\overline{\psi(n)}(m-\sqrt{t}n)^{2\nu+1}\right|\\
\leq & \sum\limits_{\substack{m^2-tn^2=r \\ m,n\geq 1}} (m-\sqrt{t}n)^{2\nu+1}\\
=&\sum\limits_{m_0,n_0} (m_0-\sqrt{t}n_0)^{2\nu+1}\sum\limits_{k=0}^\infty \eps^{-k(2\nu+1)}<\infty
\end{align*}
because $\sum\limits_{k=0}^\infty \eps^{-k}$ is a geometric series and the set of possible $(m_0,n_0)$ is finite. This completes the proof.
\end{proof}
Obviously, $\Lambda_{s,t}$ from \Cref{theo:main32}  is an indefinite theta function with some polynomial factor. For simplicity, we assume $s$ and $t$ to be coprime from now on. It is then easy to see (cf. \Cref{sec:Ex} and \cite{Mert13}) that the function $\Lambda_{s,t}^{\chi,\psi}$ is a linear combination of derivatives of Appell-Lerch sums provided $s$ and $t$ are both perfect squares. If at least one of $s$ and $t$ is not a perfect square, one supposedly needs a certain generalization of Appell-Lerch sums of which $\Lambda_{s,t}^{\chi,\psi}$ is a derivative. The author plans to address this question in a forthcoming publication.

To conclude this section, we mention a nice structural corollary of \Cref{theo:main32}.
\begin{corollary}
With the notation from \Cref{theo:main32} the following is true. 

The equivalence classes $\Lambda_{s,t}^{\chi,\psi}+M_{2\nu+2}^!(\Gamma_1(N))$ generate the $\C$-vector space 
\[[\M^{mock}_{\frac 32}(\Gamma),M_\frac{1}{2}(\Gamma)]_\nu/M_{2\nu+2}^!(\Gamma)\]
of all $\nu$th order Rankin-Cohen brackets of mock modular forms of weight $\tfrac 32$ with holomorphic shadow with weight $\tfrac 12$ modular forms modulo the space of weakly holomorphic modular forms of weight $2\nu+2$.
\end{corollary}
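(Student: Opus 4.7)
My plan is to obtain this corollary essentially as a formal consequence of Theorem \ref{theo:main32}. By $\C$-bilinearity of the Rankin-Cohen bracket, every element of $[\M^{mock}_{\frac 32}(\Gamma), M_{\frac 12}(\Gamma)]_\nu$ is a finite $\C$-linear combination of pure brackets $[f,g]_\nu$ with $f \in \M^{mock}_{\frac 32}(\Gamma)$ and $g \in M_{\frac 12}(\Gamma)$. Hence it suffices to check, for a single such pair $(f,g)$, that the class $[f,g]_\nu + M_{2\nu+2}^!(\Gamma)$ lies in the $\C$-span of the classes $\Lambda_{s,t}^{\chi,\psi} + M_{2\nu+2}^!(\Gamma)$.

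Fix such $(f,g)$. By definition of $\M^{mock}_{\frac 32}(\Gamma)$, $f$ is the holomorphic part of some harmonic weak Maa{\ss} form in $\mathcal{H}_{\frac 32}(\Gamma)$ with holomorphic shadow, so the hypotheses of Theorem \ref{theo:main32} are met. That theorem then furnishes an explicit finite $\C$-linear combination $L_\nu^{f,g}$ of functions $\Lambda_{s,t}^{\chi,\psi}$ such that, for $\nu \geq 1$,
\[
[f,g]_\nu + L_\nu^{f,g} \in M_{2\nu+2}(\Gamma) \subseteq M_{2\nu+2}^!(\Gamma).
\]
Consequently $[f,g]_\nu \equiv -L_\nu^{f,g} \pmod{M_{2\nu+2}^!(\Gamma)}$, which is the desired representation in the quotient.

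For $\nu = 0$ the output of Theorem \ref{theo:main32} is only quasi-modular of weight $2$, so it decomposes as $h + \alpha E_2$ with $h \in M_2(\Gamma) \subseteq M_2^!(\Gamma)$ and $\alpha \in \C$. The remaining task reduces to showing that the class of $E_2$ modulo $M_2^!(\Gamma)$ itself lies in the span of the $\Lambda$-classes; this is precisely what is achieved by the classical Kronecker-Hurwitz class number relation recalled in the introduction, applied to the pair $(\calH, \theta)$ with $\theta(\tau) = \sum_{n \in \Z} q^{n^2}$, which expresses a nonzero multiple of $E_2$ modulo $M_2^!(\Gamma_1(4))$ as a combination of $\Lambda_{1,1}$-type series together with $\calH \cdot \theta$ itself (and its $U(4)$-image) whose classes can in turn be resolved using the already-established $\nu = 0$ identity for that pair. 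The only genuine obstacle is thus this quasi-modular bookkeeping in degree zero; the substantive content of the corollary is entirely contained in Theorem \ref{theo:main32}, and for $\nu \geq 1$ the argument is purely formal.
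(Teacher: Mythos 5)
For $\nu\geq 1$ your argument is exactly the paper's: the paper's entire proof is the remark that the corollary is just a restatement of \Cref{theo:main32}, and your appeal to bilinearity of the bracket plus the containment $M_{2\nu+2}(\Gamma)\subseteq M_{2\nu+2}^!(\Gamma)$ spells that out correctly. You also correctly isolate the one point where "restatement" is not quite automatic, namely $\nu=0$, where the theorem only delivers a quasi-modular form $h+\alpha E_2$ and $E_2\notin M_2^!(\Gamma)$.

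However, your proposed repair of the $\nu=0$ case is circular. The Kronecker--Hurwitz relation \emph{is} the $\nu=0$ instance of \Cref{theo:main32} for the pair $(\calH,\vartheta)$ (this is precisely how it is rederived in \Cref{sec:Trace}): it says $[\calH,\vartheta]_0+\tfrac12\Lambda_{1,1}^{1,1}\equiv\alpha E_2$ modulo honest modular forms, with $\alpha\neq 0$. You propose to use this to place the class of $E_2$ in the span of the $\Lambda$-classes by "resolving" the class of $\calH\vartheta$ via "the already-established $\nu=0$ identity for that pair" --- but that identity is the very same relation, so substituting it back yields $0\equiv 0$ and no new information. What you would actually need is an expression for $E_2$ as a linear combination of the $\Lambda_{s,t}^{\chi,\psi}$ plus a weakly holomorphic form, with no mixed mock term on the right-hand side, and neither the theorem nor the classical relation supplies that. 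Either the $\nu=0$ case of the corollary should be read with the quotient enlarged by $\C E_2$ (quasi-modular forms), matching the phrasing of \Cref{theo:main32}, or the $E_2$-component must be dealt with by a genuinely independent input; your argument does neither. For $\nu\geq 1$ your proof stands as written.
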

\begin{proof}
This is just another way to state \Cref{theo:main32}.
\end{proof}
\section{Mock Theta Functions}\label{sec:MockTheta}
In this section we obtain analoguous results as in \Cref{sec:weight}. So let us assume that $f\in\mathcal{S}_{\frac 12}$ is a mock theta function, i.e. $\xi f$ is a (linear combination of) weight $\tfrac 32$ unary theta series $\theta_{\chi,s}$ as defined in \eqref{eq:theta} (which are always cusp forms), and $g$ also a (linear combination of) such functions. The calculations in this section are essentially the same ones as in \Cref{sec:weight}.
\begin{lemma}\label{lem:binomial12}
For all $\nu\in\N_0$ we have that
\[\sum\limits_{\mu=0}^\nu \frac{(-1)^\mu}{(2(j-\mu)+1}{{4\nu-2\mu+1} \choose {2(\nu-\mu)+1,2\nu-\mu}}=(-1)^j2^{4\nu}\frac{(2\nu-j)!j!}{(2(\nu-j))!(2j+1)!}.\]
\end{lemma}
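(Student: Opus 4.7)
The proof will follow the exact same strategy as used for \Cref{lem:binomial}(i): express the sum as a multiple of a Saalschützian ${}_3F_2$ hypergeometric series, evaluate it via the Pfaff-Saalsch\"utz identity \eqref{eq:Pfaff}, and simplify the resulting product of Gamma factors using the Legendre duplication formula \eqref{eq:Legendre} and the Euler reflection formula \eqref{eq:Gamma}.

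The plan is first to denote the $\mu$-th summand by $c_\mu$ and compute the ratio
\[
\frac{c_{\mu+1}}{c_\mu}=\frac{(\mu-j-\tfrac12)(\mu-\nu-\tfrac12)(\mu-\nu)}{(\mu-j+\tfrac12)(\mu-2\nu-\tfrac12)(\mu+1)},
\]
where a factor $(\mu-2\nu)$ cancels between numerator and denominator (this cancellation is what makes the expression a ${}_3F_2$ rather than a ${}_4F_3$). After extracting
\[
c_0=\frac{(4\nu+1)!}{(2j+1)(2\nu+1)!(2\nu)!},
\]
the sum equals
\[
c_0\cdot{}_3F_2\!\left(\begin{matrix} -\nu,\,-\nu-\tfrac12,\,-j-\tfrac12 \\ -2\nu-\tfrac12,\,-j+\tfrac12\end{matrix};1\right).
\]
I will then check the Saalsch\"utzian balancing condition $a_1+a_2+a_3+1=b_1+b_2$ (both sides evaluate to $-j-2\nu$), so \eqref{eq:Pfaff} applies with $n=\nu$, $a=-j-\tfrac12$, $b=-\nu-\tfrac12$, $c=-j+\tfrac12$, yielding
\[
{}_3F_2=\frac{\nu!\,(\nu-j+1)_\nu}{(-j+\tfrac12)_\nu\,(\nu+\tfrac32)_\nu}.
\]

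The final step is the conversion of Pochhammer symbols to factorials. The Legendre duplication formula applied (twice) gives $(\nu+\tfrac32)_\nu=\frac{(4\nu+2)!(\nu+1)!}{4^\nu(2\nu+1)!(2\nu+2)!}$, while $(\nu-j+1)_\nu=\frac{(2\nu-j)!}{(\nu-j)!}$. The one subtle quantity is $(-j+\tfrac12)_\nu=\Gamma(\nu-j+\tfrac12)/\Gamma(-j+\tfrac12)$, whose denominator has a pole or negative argument; here I will invoke the reflection formula \eqref{eq:Gamma} to write
\[
\Gamma(-j+\tfrac12)=\frac{(-1)^j\pi}{\Gamma(j+\tfrac12)},
\]
which produces the sign $(-1)^j$ on the right-hand side. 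Substituting everything back and repeatedly using $(4\nu+2)!=(4\nu+2)(4\nu+1)!$ and $(2\nu+2)!=(2\nu+2)(2\nu+1)(2\nu)!$ collapses the expression — most factors either telescope or cancel — into $(-1)^j 2^{4\nu}\frac{(2\nu-j)!\,j!}{(2(\nu-j))!\,(2j+1)!}$, matching the claim.

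The main obstacle is purely bookkeeping: the half-integer Pochhammer symbols must be translated back into factorials without losing track of powers of $2$, signs, or the case split implicit in $j\lessgtr\nu$ (which is cleanly absorbed by the reflection formula). I expect no genuine difficulty beyond this, since the analogous manipulation in the proof of \Cref{lem:binomial}(i) is already on record, and the present identity is, structurally, its close relative with shifted parameters (multinomial upper indices shifted by $+2$ and denominator $\mu-j+\tfrac12$ replaced by $2(j-\mu)+1$).
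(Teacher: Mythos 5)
Your proposal is correct and follows exactly the paper's route: recognize the sum as a Saalsch\"utzian ${}_3F_2$ with the same prefactor $\tfrac{1}{2j+1}\binom{4\nu+1}{2\nu+1}$ and parameters, evaluate it by the Pfaff--Saalsch\"utz identity \eqref{eq:Pfaff} to get $\nu!(\nu-j+1)_\nu/\bigl((-j+\tfrac12)_\nu(\nu+\tfrac32)_\nu\bigr)$, and reduce the Gamma quotients via the duplication formula \eqref{eq:Legendre} and the reflection formula \eqref{eq:Gamma}. You merely spell out the final bookkeeping that the paper dismisses as "easily obtained," and your intermediate quantities (the term ratio, $c_0$, and the Pochhammer evaluations) all check out.
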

\begin{proof}
As in \Cref{lem:binomial}, we write the left hand side as a hypergeometric series. Call the summand $c_\mu$, then see that
\begin{align*}
\sum\limits_{\mu=0}^\nu c_\mu=&\frac{1}{2j+1}{{4\nu+1} \choose {2\nu+1}}{}_3F_2\left(\begin{matrix} -\nu,-j-\frac 12,-\nu-\frac 12 \\ -j+\frac 12,-2\nu-\frac 12\end{matrix};1\right)\\
                             \overset{\eqref{eq:Pfaff}}{=}&\frac{1}{2j+1}{{4\nu+1} \choose {2\nu+1}}\frac{(1)_\nu(\nu-j+1)_\nu}{\left(-j+\frac 12\right)_\nu\left(\nu+\frac 32\right)_\nu}\\
                             =&\frac{1}{2j+1}{{4\nu+1} \choose {2\nu+1}}\nu!\frac{(2\nu-j)!}{(\nu-j)!}\frac{\Gamma\left(\nu+\frac 32\right)\Gamma\left(-j+\frac 12\right)}{\Gamma\left(\nu-j+\frac 12\right)\Gamma\left(2\nu+\frac 32\right)}
\end{align*}
As before, the result is now easily obtained by simplifying this expression using the duplication formula \eqref{eq:Legendre} and the reflection formula \eqref{eq:Gamma}.
\end{proof}
\begin{proposition}\label{prop:Pmn12}
The following identity holds true for all $\nu\geq 0$ and $r:=m-n$.
\begin{gather}\label{eq:idsum12}
\begin{aligned}
&\sum\limits_{\mu=0}^\nu {{\nu-\frac 12} \choose {\nu-\mu}}{{\nu+\frac 12} \choose \mu}\left(m^{-\nu-\frac 12}P_{2\nu+2,\frac 32-\mu}(r,n)-n^{\mu-\frac 12}m^{\nu-\mu}\right) \\
&\qquad\qquad\qquad =-2^{-2\nu}{{2\nu}\choose \nu}(mn)^{-\frac 12}\left(m^\frac{1}{2}-n^\frac 12\right)^{2\nu+1}
\end{aligned}
\end{gather}
\end{proposition}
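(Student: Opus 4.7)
My plan is to follow the template of the proof of Proposition \ref{prop:Pmn} essentially verbatim, with the half-integer parameters shifted by one throughout. The case $\nu = 0$ is immediate: $P_{2,3/2}(r,n) = 1$ since the polynomial has degree zero, so the left-hand side collapses to $m^{-1/2} - n^{-1/2}$, and the right-hand side equals $-(mn)^{-1/2}(m^{1/2} - n^{1/2}) = m^{-1/2} - n^{-1/2}$ as well. I therefore focus on $\nu \geq 1$ from here on.

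First I would simplify the outer product of binomial coefficients. Applying Legendre's duplication formula \eqref{eq:Legendre} together with the half-integer evaluation $\Gamma(n+\tfrac12) = \sqrt{\pi}\,(2n)!/(4^n n!)$ gives
\[\binom{\nu - \tfrac12}{\nu - \mu}\binom{\nu + \tfrac12}{\mu} = 2^{-2\nu}\binom{2\nu}{\nu}\binom{2\nu+1}{2\mu},\]
which peels off exactly the constant $2^{-2\nu}\binom{2\nu}{\nu}$ that appears on the right. Expanding $(m^{1/2} - n^{1/2})^{2\nu+1}$ via the binomial theorem and separating even from odd indices, one finds
\[(mn)^{-1/2}(m^{1/2}-n^{1/2})^{2\nu+1} = \sum_{\mu=0}^\nu \binom{2\nu+1}{2\mu} m^{\nu-\mu}n^{\mu-1/2} - \sum_{\mu=0}^\nu \binom{2\nu+1}{2\mu+1} m^{\nu-\mu-1/2}n^\mu,\]
after which the $n^{\mu-1/2}m^{\nu-\mu}$-pieces on the left cancel identically against the first of these two sums. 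Clearing the factor $m^{-\nu-1/2}$ from what remains reduces the proposition to the polynomial identity
\[\sum_{\mu=0}^\nu \binom{2\nu+1}{2\mu} P_{2\nu+2,\, 3/2-\mu}(r,n) = \sum_{\mu=0}^\nu \binom{2\nu+1}{2\mu+1} m^{2\nu-\mu} n^\mu\]
in $m = r + n$ and $n$, which is the exact analogue of equation \eqref{eq:idsum}.

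To prove this polynomial identity I would expand $P_{2\nu+2,\,3/2-\mu}(r,n)$ via the second form of Lemma \ref{lem:PXY}, interchange the two summations, and compare coefficients of $m^{2\nu-j}(-n)^j$ for each $j \in \{0,\ldots,2\nu\}$. The linchpin is simplifying the product $\binom{2\nu-\mu+1/2}{2\nu-j}\binom{j-\mu-1/2}{j}$ using the reflection formula \eqref{eq:Gamma} and the duplication formula \eqref{eq:Legendre}. Crucially, because $\Gamma(j-\mu+\tfrac32) = (j-\mu+\tfrac12)\,\Gamma(j-\mu+\tfrac12)$, the half-integer Gamma functions do not fully cancel but leave behind a factor $\tfrac{1}{j-\mu+1/2} = \tfrac{2}{2(j-\mu)+1}$, which is precisely the factor that Lemma \ref{lem:binomial12} is designed to evaluate.

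Once the inner $\mu$-sum has been arranged in that form, applying Lemma \ref{lem:binomial12} and tidying up a few residual ratios of factorials (notably $(4\nu-2\mu+2)!/(2\nu-\mu+1)! = 2(4\nu-2\mu+1)!/(2\nu-\mu)!$) causes the coefficient of $m^{2\nu-j}n^j$ on the left to collapse to $\binom{2\nu+1}{2j+1}$ for $0 \leq j \leq \nu$ and to $0$ for $\nu < j \leq 2\nu$ (the vanishing coming from $\Gamma(2(\nu-j)+1)^{-1} = 0$ at non-positive integers), which matches the right-hand side term by term. I expect the main obstacle to be the Gamma-function bookkeeping in the previous step: several powers of $2$, of $\sqrt{\pi}$, and sign factors produced by repeated application of \eqref{eq:Gamma} and \eqref{eq:Legendre} must all be tracked cleanly, but this is exactly parallel to the corresponding calculation in the proof of Proposition \ref{prop:Pmn} (with the half-integer parameters shifted by one), so no new conceptual difficulty arises.
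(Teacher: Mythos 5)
Your proposal is correct and mirrors the paper's own argument step for step: the same reduction of the outer binomial product to $2^{-2\nu}\binom{2\nu}{\nu}\binom{2\nu+1}{2\mu}$, the same cancellation against the binomial expansion of $(mn)^{-1/2}(m^{1/2}-n^{1/2})^{2\nu+1}$, the same expansion of $P_{2\nu+2,\frac32-\mu}$ via the second identity of \Cref{lem:PXY}, and the same evaluation of the inner $\mu$-sum via \Cref{lem:binomial12}. No substantive differences to report.
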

\begin{proof}
The assertion is obvious for $\nu=0$, thus suppose $\nu\geq 1$.

From the proof of \Cref{prop:Pmn} we immediately get that
\begin{align*}
{{\nu-\frac 12} \choose {\nu-\mu}}{{\nu+\frac 12} \choose \mu}=2^{-2\nu}{{2\nu}\choose \nu}{{2\nu+1} \choose {2\mu}}.
                                                              \end{align*}
Thus using \Cref{lem:PXY}, we get that the left-hand side of \eqref{eq:idsum12} equals
\begin{align*}
2^{-2\nu}{{2\nu}\choose \nu}\left(\sum\limits_{\mu=0}^\nu {{2\nu+1} \choose {2\mu}}  \left[\sum\limits_{j=0}^{2\nu} (-1)^j{{2\nu-\mu+\frac 12} \choose {2\nu-j}}{{j-\mu-\frac 12} \choose j}m^{\nu-j-\frac 12}n^j\right]\right.\\
\left.  -    \sum\limits_{\mu=0}^\nu   {{2\nu+1} \choose {2\mu}} m^{\nu-\mu}n^{\mu-\frac 12}\right)
\end{align*}
while the right-hand side is given by
\[-2^{-2\nu}{{2\nu}\choose \nu}\left(\sum\limits_{\mu=0}^\nu {{2\nu+1} \choose {2\mu}}m^{\nu-\mu+\frac 12}n^{\mu-\frac 12}-\sum\limits_{\mu=0}^\nu {{2\nu+1} \choose {2\mu+1}} m^{\nu-\mu-\frac 12}n^{\mu} \right).\]
Hence we just have to show that
\[\sum\limits_{\mu=0}^\nu {{2\nu+1} \choose {2\mu}}  \left[\sum\limits_{j=0}^{2\nu} (-1)^j{{2\nu-\mu+\frac 12} \choose {2\nu-j}}{{j-\mu-\frac 12} \choose j}m^{\nu-j-\frac 12}n^j\right]=\sum\limits_{\mu=0}^\nu {{2\nu+1} \choose {2\mu+1}} m^{\nu-\mu-\frac 12}n^{\mu} .\]
The product of the binomial coefficients in the inner sum can be simplified as in the proof of \Cref{lem:binomial}:
\begin{align*}
{{2\nu-\mu+\frac 12} \choose {2\nu-j}}{{j-\mu-\frac 12} \choose j}=\frac{\left(4\nu-2\mu+1\right)}{\left(2(j-\mu)+1\right)}(-1)^{\mu} 2^{-4\nu+1}\frac{(4\nu-2\mu-1)!(2\mu)!}{(2\nu-\mu-1)!(2\nu-j)!j!\mu!},
\end{align*}
such that we have for the left-hand side the following,
\begin{align*}
 &2^{-4\nu}\sum\limits_{\mu=0}^\nu \sum\limits_{j=0}^{2\nu} (-1)^j \frac{(2\nu+1)!\left(4\nu-2\mu+1\right)!}{(2(\nu-\mu)+1)!(2\nu-\mu)!\left(2(j-\mu)+1\right)(2\nu-j)!j!\mu!}(-1)^\mu m^{\nu-j-\frac 12}n^j\\
=&2^{-4\nu}\sum\limits_{j=0}^{2\nu}(-1)^j\frac{(2\nu+1)!}{(2\nu-j)!j!}m^{\nu-j-\frac 12}n^j \sum\limits_{\mu=0}^\nu \frac{(-1)^\mu}{(2(j-\mu)+1}{{4\nu-2\mu+1} \choose {2(\nu-\mu)+1,2\nu-\mu}},
\end{align*}
thus by \Cref{lem:binomial12} the assertion follows.
\end{proof}
This implies our second main theorem.
\begin{theorem}\label{theo:main12}
Let $f\in\calS_{\frac 12}(\Gamma)$ be a mock theta function and $g\in\calS_{\tfrac 32}(\Gamma)$ be a theta function of weight $\tfrac 32$, where $\Gamma=\Gamma_1(4N)$ for some $N\in\N$ and let $\nu$ be a fixed non-negative integer. Then there is a finite linear combination $D_\nu^{f,g}$ of functions of the form
\begin{align*}
\Delta_{s,t}^{\chi,\psi}(\tau;\nu)=&2\sum\limits_{r=1}^\infty \left(\sum\limits_{\substack{sm^2-tn^2=r \\ m,n\geq 1}} \chi(m)\overline{\psi(n)}(\sqrt{s}m-\sqrt{t}n)^{2\nu+1}\right)q^r,
\end{align*}
where $s,t\in\N$ and $\chi,\psi$ are odd characters of conductors $F(\chi)$ and $F(\psi)$ respectively with $sF(\chi)^2,tF(\psi)^2|N$, such that 
\[[f,g]_\nu+D_\nu^{f,g}\]
is a (holomorphic) quasi-modular form of weight $2$ if $\nu=0$ or otherwise a holomorphic modular form of weight $2\nu+2$.
\end{theorem}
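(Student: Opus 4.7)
The plan is to mirror the proof of \Cref{theo:main32}, using Propositions~\ref{prop:Pmn12} and \ref{prop:pihol2} together with \Cref{lem:piholy} and \Cref{lem:binomial12} in place of their weight-$(3/2,1/2)$ counterparts. First, by bilinearity of the Rankin-Cohen bracket, it suffices to treat the case in which $\xi f = \theta_{\psi,t}$ and $g = \theta_{\chi,s}$ are themselves single odd unary theta series of weight $\tfrac 32$; for $g$ this is the assumption, and for $\xi f$ it is built into the definition of a mock theta function. After this reduction I would apply the decomposition \eqref{eq:piholRCB} to $[f,g]_\nu$. A decisive simplification occurs because $\xi f$ is a \emph{cusp} form, so $c_f^-(0) = 0$ and the middle summand involving $\pihol([y^{1-k},g]_\nu)$ drops out. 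Hence
\[
\pihol([f,g]_\nu) = [f^+,g]_\nu + \pihol([f^-,g]_\nu).
\]

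Second, I would insert the Fourier data of the theta series into \eqref{eq:piholfm}: $a_g(m)$ is supported on $m = sp^2$ ($p \geq 1$) with value $2\chi(p)p$, and $\overline{c_f^-(n)}$ is supported on $n = tq^2$ ($q \geq 1$) with value a universal nonzero constant times $\overline{\psi(q)} q$. Pulling these coefficients outside the inner $\mu$-sum, what remains is precisely the combination appearing on the left-hand side of \Cref{prop:Pmn12}, which collapses it to $-2^{-2\nu}\binom{2\nu}{\nu}(mn)^{-1/2}(\sqrt{m}-\sqrt{n})^{2\nu+1}$. The factor $(mn)^{-1/2} = (\sqrt{s}\,p\cdot\sqrt{t}\,q)^{-1}$ absorbs the $p$ and $q$ carried by the theta coefficients, so that each summand indexed by $(p,q)$ with $sp^2 - tq^2 = r$ contributes exactly $2\chi(p)\overline{\psi(q)}(\sqrt{s}p - \sqrt{t}q)^{2\nu+1}$, up to a global constant in $s,t,\nu$. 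This is, up to normalization, the coefficient defining $\Delta_{s,t}^{\chi,\psi}$.

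Third, I would check convergence of the resulting series by the same Pell-equation argument as in \Cref{theo:main32}: the integer solutions of $sp^2 - tq^2 = r$ lie in finitely many orbits under multiplication by the fundamental unit $\eps$ of $\Z[\sqrt{st}]$, and the polynomial factor $(\sqrt{s}p - \sqrt{t}q)^{2\nu+1}$ decays geometrically as $\eps^{-k(2\nu+1)}$ along each orbit. With $D_\nu^{f,g} := -\pihol([f^-,g]_\nu)$ expressed in terms of the $\Delta_{s,t}^{\chi,\psi}$ by the computation above, the equality
\[
[f,g]_\nu + D_\nu^{f,g} = \pihol([f,g]_\nu)
\]
together with \Cref{prop:pihol2} shows that the left-hand side is a holomorphic modular form of weight $k + \ell + 2\nu = 2\nu + 2$ when $\nu \geq 1$, and a quasi-modular form of weight $2$ when $\nu = 0$.

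The main obstacle is verifying that $[f,g]_\nu$ really satisfies the two hypotheses of \Cref{def:pihol} so that $\pihol$ may be applied: one needs the bounded behavior at every cusp of $\Gamma$ required by $(i)$, and the polynomial bound $a_{[f,g]_\nu}(n,y) = \calO(y^{1-(k+\ell+2\nu)+\varepsilon})$ for $n > 0$ required by $(ii)$. Since $f$ is a mock \emph{cusp} form (its shadow is cuspidal) and $g$ is an actual cusp form, both vanish at all cusps; the incomplete-gamma Fourier coefficients of $f^-$, multiplied by the rapidly decaying coefficients of $g$, give precisely the required growth for the integral in \eqref{eq:holproj} to converge absolutely and for Fubini-Tonelli to justify the interchange of summation and integration used implicitly above.
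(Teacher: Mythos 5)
Your proposal is correct and follows essentially the same route as the paper, whose proof simply invokes \eqref{eq:piholfm} together with \Cref{prop:Pmn12} and the explicit Fourier coefficients of the unary theta series \eqref{eq:theta} (the odd character forcing $c_f^-(0)=0$), with the applicability of $\pihol$ and the convergence of the indefinite sums handled exactly as in the proof of \Cref{theo:main32}; your write-up merely makes the intermediate bookkeeping explicit. The only slip is a sign: since $\pihol([\widehat f,g]_\nu)=[f^+,g]_\nu+\pihol([f^-,g]_\nu)$, you should take $D_\nu^{f,g}=+\pihol([f^-,g]_\nu)$ rather than its negative, which is immaterial for the statement because $D_\nu^{f,g}$ is only required to be \emph{some} finite linear combination of the $\Delta_{s,t}^{\chi,\psi}$.
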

\begin{proof}
Follows immediately from \ref{eq:piholfm} using \Cref{prop:Pmn12} and the fact that both $\xi f$ and $g$ are by assumption linear combinations of weight $\tfrac 32$ theta functions of the form
\begin{gather}\label{eq:theta}
\theta_{\chi,s}=\sum\limits_{n\in\Z} n\chi(n)q^{sn^2}
\end{gather}
with $\chi$ and $s$ as described in the Theorem. The application of \eqref{eq:piholfm} can be justified just as in the proof of \Cref{theo:main32}.
\end{proof}
\begin{remark}
\begin{enumerate}[(i)]
\item Up to the difference in the required parity of the characters, the functions $\Lambda_{s,t}^{\chi,\psi}$ from \Cref{theo:main32} and $\Delta_{s,t}^{\chi,\psi}$ from \Cref{theo:main12} are exactly the same. Note that for an odd character $\psi$ we always have $\psi(0)=0$.
\item Note that it is a real restriction to require $f$ to be a completed mock theta function and $g$ to be a weight $\tfrac 32$ unary theta function in the statement of \Cref{theo:main12} since there is no analogue of the theorem of Serre-Stark in weight $\tfrac 32$, i.e. there are modular forms of that weight, which are NOT linear combinations of theta functions.
\end{enumerate}
\end{remark}
\begin{corollary}
With the notation from \Cref{theo:main12} the following is true. 

The equivalence classes $\Delta_{s,t}^{\chi,\psi}+M_{2\nu+2}^!(\Gamma_1(N))$ generate the $\C$-vector space 
\[[\calS^{mock-\vartheta}_{\frac 12}(\Gamma),\calS^\vartheta_\frac{3}{2}(\Gamma)]_\nu/M_{2\nu+2}^!(\Gamma)\]
of all $\nu$th order Rankin-Cohen brackets of mock theta functions with weight $\tfrac 32$ theta functions modulo the space of weakly holomorphic modular forms of weight $2\nu+2$.
\end{corollary}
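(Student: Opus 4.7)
The plan is to recognise this corollary as essentially a reformulation of Theorem~\ref{theo:main12}, in perfect analogy with how the earlier corollary followed from Theorem~\ref{theo:main32}. No serious new work is required beyond unpacking the statement.

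First I would verify the spanning property. Given any Rankin-Cohen bracket $[f,g]_\nu$ with $f\in\calS^{mock-\vartheta}_{\tfrac 12}(\Gamma)$ and $g\in\calS^{\vartheta}_{\tfrac 32}(\Gamma)$, Theorem~\ref{theo:main12} supplies a finite linear combination $D_\nu^{f,g}$ of the $\Delta_{s,t}^{\chi,\psi}$ such that $[f,g]_\nu+D_\nu^{f,g}$ is a holomorphic modular form of weight $2\nu+2$, and hence weakly holomorphic. Reducing this identity modulo $M_{2\nu+2}^!(\Gamma)$ yields
\[
[f,g]_\nu+M_{2\nu+2}^!(\Gamma)=-D_\nu^{f,g}+M_{2\nu+2}^!(\Gamma),
\]
which is a finite $\C$-linear combination of the classes $\Delta_{s,t}^{\chi,\psi}+M_{2\nu+2}^!(\Gamma)$. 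This is precisely the claimed spanning statement.

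Next I would confirm that each $\Delta_{s,t}^{\chi,\psi}+M_{2\nu+2}^!(\Gamma)$ really does represent an element of the quotient space, i.e.\ that it can be realised as such a Rankin-Cohen bracket up to a weakly holomorphic form. For this I would invoke the surjectivity of $\xi_{\tfrac 12}$ recorded in Section~\ref{sec:Mock}: for each weight $\tfrac 32$ theta series $\theta_{\psi,t}$ one picks a harmonic Maa{\ss} form $F$ with $\xi_{\tfrac 12}F=\theta_{\psi,t}$, so that $f:=F^+\in\calS^{mock-\vartheta}_{\tfrac 12}(\Gamma)$ by definition, and pairs it with $g:=\theta_{\chi,s}$. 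Applying Theorem~\ref{theo:main12} to this specific pair isolates $\Delta_{s,t}^{\chi,\psi}$ (up to the explicit nonzero constant analogous to \eqref{eq:const}) as equivalent to $-[f,g]_\nu$ modulo $M_{2\nu+2}^!(\Gamma)$.

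The only subtle point I foresee is the case $\nu=0$, where Theorem~\ref{theo:main12} only guarantees quasi-modularity of weight $2$ rather than honest modularity. The quasi-modular defect is a scalar multiple of the Eisenstein series $E_2$, which one may either absorb into the $\Delta$-combination or handle by reading the quotient with $M_2^!$ enlarged by $\C\cdot E_2$. Beyond this bookkeeping I expect no real obstacle: all the analytic content has already been packed into Theorem~\ref{theo:main12}, and the corollary is its structural translation.
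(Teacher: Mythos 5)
Your proposal is correct and matches the paper's approach: the paper treats this corollary (like its weight $\tfrac 32$ counterpart) as a direct restatement of \Cref{theo:main12}, which is exactly the reduction you carry out. The extra details you supply (surjectivity of $\xi_{\frac 12}$ to realise each $\Delta_{s,t}^{\chi,\psi}$ as a bracket, and the $\nu=0$ quasi-modularity caveat) are sound but not beyond what the paper's one-line justification already encapsulates.
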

\section{Examples}\label{sec:Ex}
\subsection{Trace Formulas}\label{sec:Trace}
Let $\calH$ denote the generating function of Hurwitz class numbers as defined in the introduction and $\vartheta(\tau)=\sum\limits_{n\in\Z} q^{n^2}$. Then we have the following result due to D. Zagier (\cite{HZ76}, Theorem 2 of Chapter 2)
\begin{theorem}[Zagier, 1976]
Let 
\[\calR(\tau):=\frac{1+i}{16\pi}\int\limits_{-\overline{\tau}}^{i\infty} \frac{\vartheta(z)}{(z+\tau)^{\frac 32}}dz=\frac{1}{8\pi\sqrt{y}}+\frac{1}{4\sqrt{\pi}}\sum\limits_{n=1}^\infty n\Gamma\left(-\frac 12;4\pi n^2y\right)q^{-n^2}.\]
Then the function $\widehat{\calH}=\calH+\calR$ is a harmonic Maa{\ss} form of weight $\tfrac 32$ on $\Gamma_0(4)$ with shadow $\xi\widehat{\calH}=\tfrac{1}{8\sqrt{\pi}}\vartheta$.
\end{theorem}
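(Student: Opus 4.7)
The plan is to verify the defining properties of a harmonic weak Maass form for $\widehat{\calH}$ in turn: the $\Gamma_0(4)$-equivariance in weight $\tfrac 32$, annihilation by $\Delta_{3/2}$, and the cusp growth condition; the computation of the shadow is essentially a byproduct. Conceptually, $\calR$ is the classical non-holomorphic Eichler integral attached to $\vartheta$, built precisely so that applying $\xi_{3/2}$ recovers $\vartheta$ (up to a constant) and so that the cocycle failure of $\calR$ under $\Gamma_0(4)$ exactly cancels that of $\calH$.

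As a preparatory step, I would verify that the two given expressions for $\calR$ coincide. Substituting $z = -x + iu$ with $u \in [y,\infty)$ yields $z+\tau = i(u+y)$, and the expansion $\vartheta(-x+iu) = \sum_{n \in \Z} e^{-2\pi i n^2 x} e^{-2\pi n^2 u}$ converges uniformly on this contour. Interchanging sum and integral, the substitution $v = 4\pi n^2(u+y)$ in each $n \neq 0$ term produces an incomplete Gamma function with the correct coefficient (after pairing $\pm n$), while the $n=0$ term reduces to the elementary $\tfrac{1}{8\pi\sqrt{y}}$; careful bookkeeping of the principal branch of $(\cdot)^{3/2}$ and the prefactor $\tfrac{1+i}{16\pi}$ reproduces the stated series.

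Next, I compute $\xi_{3/2}\widehat{\calH}$. Since $\calH$ is holomorphic, $\xi_{3/2}\calH = 0$, and it suffices to handle $\calR$. Differentiating the integral directly, since $\tau$ enters only through the lower limit $-\overline{\tau}$ (the integrand being holomorphic in $\tau$ for fixed $z$), the fundamental theorem of calculus gives
\[
\partial_{\overline{\tau}}\calR = \frac{1+i}{16\pi}\cdot\frac{\vartheta(-\overline{\tau})}{(2iy)^{3/2}}.
\]
Taking the complex conjugate, using $\overline{\vartheta(-\overline{\tau})} = \vartheta(\tau)$ (real Fourier coefficients), multiplying by $2iy^{3/2}$, and simplifying the complex constants via the principal branch produces the asserted multiple of $\vartheta$; the same constant can be recovered term-by-term from the series expansion as a consistency check. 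Annihilation by $\Delta_{3/2}$ is then automatic, since $\Delta_{3/2} = -\xi_{1/2}\circ\xi_{3/2}$ and the output $\xi_{3/2}\widehat{\calH}$ is holomorphic in $\tau$.

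The hard step is the transformation law. The strategy is to show that $\calR|_{3/2}\gamma - \calR$ is a holomorphic function of $\tau$ equal to $\calH - \calH|_{3/2}\gamma$ for every $\gamma \in \Gamma_0(4)$. Changing variables $z \mapsto \gamma z$ in the defining integral, the Jacobian $(cz+d)^{-2}$ combines with the kernel $(z+\tau)^{-3/2}$ and the weight $\tfrac 12$ theta transformation on $\Gamma_0(4)$ to rewrite $\calR|_{3/2}\gamma$ as the same integral but now along a contour from $\gamma^{-1}(-\overline{\gamma\tau})$ to $i\infty$; the difference from $\calR$ is a period integral of $\vartheta(z)(z+\tau)^{-3/2}$ along a compact arc inside $\bbH$, and hence a manifestly holomorphic function of $\tau$. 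Since it suffices to check equivariance on generators of $\Gamma_0(4)$, one reduces to a small number of test matrices and matches the resulting holomorphic cocycles against the transformation of $\calH$ read off from its $q$-expansion. Finally, at each of the cusps $\infty,\,0,\,\tfrac 12$ of $\Gamma_0(4)$, the Fourier coefficients of $\widehat{\calH}$ grow at most polynomially (from the classical bound $H(n) = O(n^{1/2+\eps})$) and $\calR$ decays rapidly in $y$, so the linearly-exponential growth condition is trivially satisfied.
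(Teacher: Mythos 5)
First, note that the paper does not prove this statement at all: it is quoted verbatim from Hirzebruch--Zagier \cite{HZ76} (Theorem 2 of Chapter 2), so there is no internal proof to compare against. Judged on its own terms, the computational parts of your outline are sound: the parametrization $z=-x+iu$, the termwise substitution producing $\Gamma\left(-\tfrac 12;4\pi n^2y\right)$ and the elementary $n=0$ term, the computation of $\xi_{3/2}\calR$ from the lower limit of the integral, and the deduction of harmonicity from $\Delta_{3/2}=-\xi_{1/2}\circ\xi_{3/2}$ applied to a holomorphic image are all standard and correct in outline (the precise constant in the shadow is a normalization matter and not the issue).

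The genuine gap is in the transformation law. Your change of variables correctly shows that $\calR|_{3/2}\gamma-\calR$ is a period integral over a compact arc, hence holomorphic in $\tau$; but the proposed final step --- ``matching the resulting holomorphic cocycles against the transformation of $\calH$ read off from its $q$-expansion'' --- is circular. A $q$-expansion determines the behaviour of $\calH$ only under $\tau\mapsto\tau+1$, for which both cocycles are trivially zero. For the remaining generators of $\Gamma_0(4)$ there is no a priori formula for $\calH\left(\tfrac{a\tau+b}{c\tau+d}\right)$: the function $\calH$ is defined purely by its Fourier coefficients, it is \emph{not} modular, and its failure of modularity is exactly the unknown quantity the theorem pins down. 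Nothing in your argument produces the identity $\calH|_{3/2}\gamma-\calH=-(\calR|_{3/2}\gamma-\calR)$; you have only shown that the right-hand side is holomorphic. The classical proof runs in the opposite direction: one constructs a weight $\tfrac 32$ real-analytic Eisenstein series on $\Gamma_0(4)$ (Zagier's $\mathcal{F}(\tau,s)$, analytically continued to $s=\tfrac 34$), which is modular and annihilated by $\Delta_{3/2}$ by construction, and then computes its Fourier expansion, identifying the holomorphic coefficients with the Hurwitz class numbers $H(n)$ via class number formulas and the non-holomorphic part with $\calR$. Without such an independent modular construction of $\widehat{\calH}$ (or some equivalent input, e.g. the Shimura/Cohen--Eisenstein description of $\sum H(n)q^n$), the equivariance step cannot be completed; the same issue infects your verification of the growth condition at the cusps $0$ and $\tfrac 12$, which presupposes expansions of $\widehat{\calH}$ there.
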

Consider the function $[\calH,\vartheta]_\nu$ for some $\nu\geq 0$. \Cref{theo:main32}, Equation \eqref{eq:const} and \Cref{prop:pihol2} tell us that
\[[\calH,\vartheta]_\nu+2^{-2\nu-1}{{2\nu} \choose \nu}\Lambda\]
with $\Lambda(\tau;\nu)=\Lambda_{1,1}^{1,1}(\tau;\nu)$ as in \Cref{theo:main32} is a quasi-modular form of weight $2$ for $ \nu=0$ and a holomorphic cusp form of weight $2\nu+2$ otherwise, both on the group $\Gamma_0(4)$. It is easy to see that 
\[(\Lambda\vert U(4))(\tau;\nu)=2^{2\nu+1}\sum\limits_{n=1}^\infty 2\lambda_{2\nu+1}(n)q^n\]
and 
\[(\Lambda\vert S_{2,1})(\tau;\nu)=2\sum\limits_{n\text{ odd}} \lambda_{2\nu+1}(n)q^n.\]

By \cite[Theorem 6.1]{Coh75} and \cite[Lemma 3.2]{Mert13} one has the formal identity
\[S_f^1(\tau;X)=\sum\limits_{n=0}^\infty\left(\sum\limits_{s\in\Z} \frac{a(n-s^2)}{1-2sX+nX^2}\right)q^n=\sum\limits_{\nu=0}^\infty \frac{4^\nu}{{{2\nu} \choose \nu}}[f,\vartheta]_\nu(\tau),\]
where $f(\tau)=\sum\limits_{n=0}^\infty a(n)q^n$ is a modular form of weight $\tfrac 32$. From there we can deduce that also the following is true
\[\left(S_f^1|U(4)\right)(\tau;X)=\sum\limits_{n=0}^\infty\left(\sum\limits_{s\in\Z} \frac{a(4n-s^2)}{1-s(2X)+n(2X)^2}\right)q^n=\sum\limits_{\nu=0}^\infty \frac{4^\nu}{{{2\nu} \choose \nu}}([f,\vartheta]_\nu|U(4))(\tau).\]
Hence the function ($N\in\{1,4\}$)
\[C^{(N)}(\tau)=\sum\limits_{n=1}^\infty c_\nu^{(N)}(n) q^n\]
with
\begin{align}
\label{ES}c_\nu^{(1)}(n)=&\sum\limits_{s\in\Z} g^{(1)}_\nu(s,n)H(4n-s^2)+2\lambda_{2\nu+1}(n)\\
\label{Coh}c_\nu^{(4)}(n)=&\begin{cases} \sum\limits_{s\in\Z} g^{(4)}_\nu(s,n)H(n-s^2)+\lambda_{2\nu+1}(n) & \text{for }n\text{ odd}\\ 0  & \text{otherwise,}\end{cases}
\end{align} 
where $g^{(1)}_\nu(s,n)$ (resp. $g^{(4)}_\nu$) is the coefficient of $X^{2\nu}$ in the Taylor expansion of $\frac{1}{1-sX+nX^2}$ (resp. $\frac{1}{1-2sX+nX^2}$), is a holomorphic modular form of weight $2\nu+2$ on $\Gamma_0(4)$. Since the constant term in the Fourier expansion vanishes for $\nu>0$ and the Rankin-Cohen brackets interchange with the slash operator, we see that in this case we are actually dealing with cusp forms. By \cite[Lemma 4]{Li75}, the cuspforms with coefficients \eqref{ES} are in fact on the full modular group $\SLZ$.

With a little more work we can even specify the cusp form. We refer to  \cite{ZagierUtrecht} for more details: it holds that
\begin{align}
\label{EStrace}-\frac 12\sum\limits_{s\in\Z} g^{(1)}_\nu(s,n)H(4n-2^2)-\lambda_{2\nu+1}(n)&=\tr(T_n^{(2\nu+2)}(1))\\
\label{Cohtrace}-3\sum\limits_{s\in\Z} g^{(4)}_\nu(s,n)H(n-2^2)-3\lambda_{2\nu+1}(n)&=\tr(T_n^{(2\nu+2)}(4)),
\end{align}
where $T_n^{(k)}(N)$ denotes the $n$th Hecke operator acting on the space $S_k(\Gamma_0(N))$. Note that \eqref{Cohtrace} is only valid for odd $n$. 

The equation \eqref{EStrace} is well-known indeed as the Eichler-Selberg trace formula. Equation \eqref{Cohtrace} is to the author's knowledge first explicitly mentioned (without proof) in \cite{Mert13}.

In order to prove these two trace formulas \eqref{EStrace} and \eqref{Cohtrace} one may use the Rankin-Selberg unfolding trick to see that for any normalized Hecke eigenform $f$ on $\SLZ$ (resp. $\Gamma_0(4)$) we get for $\nu\geq 1$ (cf. \Cref{lem:proppihol})
\[\langle [\widehat{\calH},\vartheta]_\nu,f\rangle=\langle \pihol([\widehat{\calH},\vartheta]_\nu),f\rangle\overset{.}{=}\langle f,f\rangle\]
(for $\SLZ$ one has to apply $U(4)$ to obtain a cusp form of level $1$). The Hecke trace generating function 
\[\mathcal{T}_{2\nu+2}=\sum\limits_{n=1}^\infty \tr(T_n^{(2\nu+2)})q^n\]
is the sum over all normalized Hecke eigenforms, hence we also have $\langle\mathcal{T}_{2\nu+2},f\rangle=\langle f,f\rangle$ since Hecke eigenforms are orthogonal (actually, the summation should be restricted to the $n$ coprime to the level). Therefore $\pihol([\widehat{\calH},\vartheta]_\nu))\overset{.}{=}\mathcal{T}_{2\nu+2}$ which proves both trace formulas.
\subsection{Class Number Relations}
In \cite{Betal}, B. Brown et. al. conjectured a number of nice identities involving class numbers and divisor sums, which have been proven recently by K. Bringmann and B. Kane in \cite{BK13}: 

Let for an integer $a$ and an odd prime $p$
\begin{gather}\label{eq:Hap}
H_{a,p}(n):=\sum\limits_{\substack{s\in\Z \\ s\equiv a\pmod{p}}} H(4n-s^2).
\end{gather}
Then for a prime $\ell$ and $p=5$ it holds that (cf. \cite{BK13}, Equation (4.3))
\begin{gather}\label{eq:Ha5}
H_{a,5}(\ell)=\begin{cases} \frac{\ell+1}{2} & \text{if } a\equiv 0\pmod{5}\text{ and }\ell\equiv 1\pmod{5} \\
                            \frac{\ell+1}{3} & \text{if } a\equiv 0\pmod{5}\text{ and }\ell\equiv 2,3\pmod{5} \\
                            \frac{\ell+1}{3} & \text{if } a\equiv \pm 1\pmod{5}\text{ and }\ell\equiv 1,2\pmod{5} \\ 
                            \frac{5\ell+5}{12} & \text{if } a\equiv \pm 1\pmod{5}\text{ and }\ell\equiv 4\pmod{5} \\ 
                            \frac{5\ell-7}{12} & \text{if } a\equiv \pm 2\pmod{5}\text{ and }\ell\equiv 1\pmod{5} \\
                            \frac{\ell+1}{3} & \text{if } a\equiv \pm 2\pmod{5}\text{ and }\ell\equiv 3,4\pmod{5}.   \end{cases}
\end{gather}
They also give a similar result for $p=7$.

Their proof is to consider the generating function of $H_{a,p}$ which is a mixed mock modular form of weight $2$ on some subgroup of $\SLZ$ and to construct an Appell-Lerch sum which has the same non-holomorphic part as the completion of this generating function, and thus to look at identities of Fourier coefficients of holomorphic modular forms. Then it is easy to compare the first few coefficients (\cite{Kil}, Theorem 3.13 gives an explicit bound for the number of coefficients to be checked) to obtain the general result. 

With \Cref{theo:main32} we can consider more general types of sums as in \eqref{eq:Hap} without too much more work. Let therefore
\begin{gather}\label{eq:Hapnu}
H_{a,p}^{(\nu)}(n):=\sum\limits_{\substack{s\in\Z \\ s\equiv a\pmod{p}}} g_\nu^{(1)}(s,n)H(4n-s^2)
\end{gather}
with $g_\nu^{(1)}$ as in \eqref{ES} and $a,p$ as before. This is up to a constant factor the coefficient of $q^n$ in the Fourier expansion of the function $([\calH,\vartheta^{(p,a)}]_\nu\vert U(4))$ with $U$ as in \eqref{eq:U} and
\[\vartheta^{(p,a)}(\tau):=\sum\limits_{\substack{n\in\Z \\ n\equiv a\pmod{p}}} q^{n^2}.\]
From \Cref{theo:main32} we can now deduce that for
\[\Lambda^{(p,a)}_\nu(\tau):=\sum\limits_{\pm}\large[2\sum\limits_{\substack{m^2-n^2>0 \\ m,n\geq 1 \\ m\equiv \pm a\pmod{p}}} (m-n)^{2\nu+1}q^{m^2-n^2} + \sum\limits_{\substack{m\geq 1 \\ m\equiv \pm a\pmod{p}}} m^{2\nu+1}q^{m^2}\big]\]
the function $([\calH,\vartheta^{(p,a)}]_\nu)+\Lambda^{(p,a)})\vert U(4)$ is a holomorphic cusp form of weight $2+2\nu$ on some group $\Gamma\leq\SLZ$ (to be precise, $\Gamma=\Gamma_0(p^2)\cap\Gamma_1(p)$ if $a\neq 0$ and $\Gamma=\Gamma_0(p^2)$ if $a=0$, see Lemma 3.1 of \cite{BK13}) if $\nu>0$ and a quasi-modular form of weight 2 on the same group if $\nu=0$. Note that by sieving out Fourier coefficients one can turn a quasi-modular form into a holomorphic modular form.

With a little bit of elementary number theory we get the following nice representation for $\Lambda^{(p,a)}_\nu\vert U(4)$, much like in our last example.
\begin{proposition}\label{prop:Lambdapa}
Let 
\[D^{(p,a)}_k(\tau):=\sum\limits_{n=1}^\infty \lambda_k^{(p,a)}(n)q^n,\]
where
\[\lambda_k^{(p,a)}(n):=\sum\limits_{\substack{d\mid n\\ d\leq\sqrt{n}\\ d\equiv -a\pmod{p}}} d^k+\sum\limits_{\substack{d\mid n\\ d\lneqq\sqrt{n}\\ d\equiv a\pmod{p}}} d^k.\]
Then it holds that for $\nu\in\N_0$ we have for $a\neq 0$
\begin{align*}
\left((\Lambda_{2\nu+1}^{(p,a)}\vert U(4)\right)=&2^{2\nu+1}\left[\sum\limits_{b\not\equiv\pm a}\left(D_{2\nu+1}^{\left(p,\frac{a-b}{2}\right)}|S_{p,\frac{a^2-b^2}{4}}\right)(\tau)+\left(\left(D_{2\nu+1}^{(p,a)}+D_{2\nu+1}^{(p,-a)}\right)|S_{p,0}\right)(\tau)\right.\\
&\qquad\qquad\left.+p^{2\nu+1}(D_{2\nu+1}^{(1,0)}|V(p))(\tau)\right].
\end{align*}
and
\[\left((\Lambda_{2\nu+1}^{(p,0)}\vert U(4)\right)=2^{2\nu+1}\cdot\left[\sum\limits_{b\not\equiv 0} \left(D_{2\nu+1}^{\left(p,\frac{b}{2}\right)}|S_{p,-\frac{b^2}{4}}\right)+p^{2\nu+1}\left(D_{2\nu+1}^{(1,0)}|V(p^2)\right)\right]\]
otherwise.
\end{proposition}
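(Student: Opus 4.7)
The plan is to verify the identity by comparing $q^N$-coefficients on both sides for each $N \geq 1$. On the left, $U(4)$ extracts the $q^{4N}$-coefficient of $\Lambda_{2\nu+1}^{(p,a)}$, which collects pairs $(m,n)$ with $m > n \geq 1$, $m^2 - n^2 = 4N$, $m \equiv \pm a \pmod p$, together with the diagonal $n=0$, $m = 2\sqrt N$. Because $4 \mid m^2 - n^2$ forces $m \equiv n \pmod 2$, both $m \pm n$ are even, and the substitution $d := (m-n)/2$, $e := (m+n)/2$ sets up a bijection between such $(m,n)$ and ordered divisor pairs $(d,e)$ of $N$ with $0 < d < e$. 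Under this substitution $m = d+e$, so the congruence on $m$ becomes $d+e \equiv \pm a \pmod p$, and the contribution of a non-diagonal pair is $2(m-n)^{2\nu+1} = 2^{2\nu+2} d^{2\nu+1}$.

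Since $a \not\equiv 0 \pmod p$, a pair with $p \mid d$ and $p \mid e$ would give $d+e \equiv 0 \not\equiv \pm a$ and so never arises; exactly three classes of divisor pairs remain: (A) $p \nmid d$ and $p \nmid e$, equivalent to $p \nmid N$; (B) $p \nmid d$ and $p \mid e$, forcing $p \mid N$ and $d \equiv \pm a \pmod p$; (C) $p \mid d$ and $p \nmid e$, forcing $p \mid N$ and $e \equiv \pm a \pmod p$. The heart of the proof is to match each class to one of the three summands on the right-hand side.

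For class (A), I would introduce the auxiliary parameter $b := (e-d) \bmod p$; combined with $d+e \equiv \pm a$, this pins $d$ to the residue $(a-b)/2$ (in the $+$ case) or $-(a+b)/2$ (in the $-$ case) modulo $p$, and automatically forces $N \equiv (a^2-b^2)/4 \pmod p$. The involution $b \leftrightarrow -b$ merges the two $\pm$-branches into the two residue classes $\pm(a-b)/2$ recorded by $\lambda_{2\nu+1}^{(p,(a-b)/2)}(N)$; the sieve $S_{p,(a^2-b^2)/4}$ enforces the constraint on $N$, and the restriction $b \not\equiv \pm a$ precisely singles out $p \nmid (a^2-b^2)/4$, i.e.\ $p \nmid N$. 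Class (B) enumerates divisors $d \mid N$ coprime to $p$ with $d \equiv \pm a \pmod p$, which is exactly what $(D_{2\nu+1}^{(p,a)} + D_{2\nu+1}^{(p,-a)}) | S_{p,0}$ counts. For class (C), substituting $d = pd'$ with $d' \mid N/p$ extracts the factor $p^{2\nu+1}$; the sum over such $d'$, after absorbing the sibling constraint on $e$, reassembles into the third summand $p^{2\nu+1}(D_{2\nu+1}^{(1,0)} | V(p))$.

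The diagonal contribution $m = 2\sqrt N$ is incorporated into the boundary $d = \sqrt N$ of the first subsum in $\lambda^{(p,c)}$, where the inequality $d \leq \sqrt n$ is non-strict. The main obstacle is the combinatorial bookkeeping: one must track that every divisor pair $(d,e)$ is accounted for with the correct cumulative weight across the three right-hand terms, which involves the $b \leftrightarrow -b$ symmetry, the pairing of the two subsums defining $\lambda^{(p,c)}$, and the transition from $d < \sqrt N$ to $d = \sqrt N$. The $a = 0$ formula follows by the same scheme, with classes (B) and (C) replaced by the single case $p \mid d$ and $p \mid e$, which forces $p^2 \mid N$ and accounts for the $V(p^2)$-shifted last term in that regime.
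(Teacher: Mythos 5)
Your opening moves are certainly the intended ones: pass to the $q^{4N}$-coefficient, substitute $d=(m-n)/2$, $e=(m+n)/2$ to turn solutions of $m^2-n^2=4N$ into divisor pairs of $N$, and sort the pairs by the residues of $d$ and $e$ modulo $p$. But from that point on your text is a plan rather than a proof: the entire content of the proposition is the weight bookkeeping that you explicitly defer, and when one carries it out the matchings you assert do not close. For class (A), a pair $\{d,e\}$ with $d<e$ and $d+e\equiv \varepsilon a\pmod p$ is picked up by exactly one of the two subsums defining $\lambda^{(p,(a-b)/2)}_{2\nu+1}$ for exactly one admissible $b$ (the subsum $d\equiv(a-b)/2$ with $b\equiv a-2d$ when $\varepsilon=+1$, the subsum $d\equiv-(a-b)/2$ with $b\equiv a+2d$ when $\varepsilon=-1$; the sieve eliminates every other $b$). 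It therefore contributes $d^{2\nu+1}$ exactly once to the first summand, whereas the left-hand side assigns it the weight $2(m-n)^{2\nu+1}=2\cdot 2^{2\nu+1}d^{2\nu+1}$, i.e.\ $2d^{2\nu+1}$ after the overall factor $2^{2\nu+1}$ is removed. (Your class (B) does get the factor $2$, precisely because the second summand is a sum of two $D$'s.) Numerically: for $p=5$, $a=1$, $\nu=0$, $N=8$ the left side is $2\bigl((9-7)+(6-2)\bigr)=12$ while the right side is $2(2+1)=6$.

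The class (C) step is the more serious gap. The third summand $p^{2\nu+1}\bigl(D^{(1,0)}_{2\nu+1}|V(p)\bigr)$ has $q^N$-coefficient $p^{2\nu+1}\sum_{d'\mid N/p}\min\bigl(d',\tfrac{N/p}{d'}\bigr)^{2\nu+1}$, which carries no congruence condition whatsoever; but class (C) retains the condition $e\equiv\pm a\pmod p$ on the large cofactor (this is exactly the defining condition $m\equiv\pm a$ and cannot be ``absorbed''), as well as $p\nmid e$ and the inequality $pd'<e$, which is $d'<\sqrt{N}/p$ rather than $d'<\sqrt{N/p}$. For $p=5$, $a=1$, $\nu=0$, $N=30$ the only class-(C) pair is $\{5,6\}$, contributing $2\cdot 5=10$ to the bracket, while $5\,\lambda^{(1,0)}_1(6)=5\cdot 6=30$; the full comparison reads $12$ on the left against $2(0+2+30)=64$ on the right. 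So the dictionary you propose between the three classes and the three summands is exactly the point that needs proving, and as described it is false; before trusting it you must write out the coefficient identity for at least one $N\equiv 0\pmod p$ and check it numerically, which here reveals that either the normalizations in the displayed formula or your matching (or both) have to be reorganized.
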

A proof of this may be found in Proposition V.4.3. of \cite{MertPhD}.

This together with \Cref{theo:main32} is a generalization of Theorem 1.4 of \cite{BK13}.
\begin{corollary}\label{cor:ex}
\begin{enumerate}[(i)]
\item \Cref{prop:Lambdapa} yields for $p=5$ and $a=0$ that
\[(\calH\vartheta^{(5,0)})\vert U(4)+5D_1^{1,0}\vert V(25)+2D_1^{(5,1)}\vert S_{5,4}+2D_1^{(5,2)}\vert S_{5,4}\]
is a quasi modular form of weight $2$ on $\Gamma_0(25)$. By comparing Fourier coefficients one can find that this function equals
\[\frac{1}{2}G_2 +\frac{1}{12}G_2\otimes \chi_5(1-\chi_5)-G_2\vert V(5)+\frac 52 G_2\vert V(25)\]
where $G_2=-\tfrac{1}{24}+\sum\limits_{n=1}^\infty \sigma_1(n)q^n$ denotes the Eisenstein series of weight $2$, $\chi_p$ stands for the non-trivial real-valued character modulo $p$, and  $V$ and $\otimes \chi$ are as in  \eqref{eq:V} and \eqref{eq:tensor} respectively.  
\item For $p=7$ and $a=0$ we find as above that
\begin{align*}
&(\calH\vartheta_{0,7})\vert U(4)+7D_1^{1,0}\vert V(49) +2D_1^{(7,2)}\vert S_{7,3}+2D_1^{(7,4)}\vert S_{7,5} +2D_1^{(7,1)}\vert S_{7,6}\\
 =&\frac{1}{4}G_2 -\frac{1}{24}G_2\otimes \chi_7(1-\chi_7)+\frac 14 g_7
\end{align*}
where $g_7$ represents the cusp form of weight $2$ associated to the elliptic curve over $\Q$ with Weierstra{\ss} equation $y^2=x^3- 2835x - 71442$ (Cremona label 49a1, cf. \cite{lmfdb}).
\end{enumerate}
\end{corollary}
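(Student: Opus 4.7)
The plan is to identify both sides of each identity in parts (i) and (ii) as elements of the finite-dimensional space $\widetilde{M}_2(\Gamma_0(p^2))$ of quasi-modular forms of weight $2$ on $\Gamma_0(p^2)$ (with $p=5$ and $p=7$ respectively), and then to pin down the equality by matching finitely many Fourier coefficients. For the left-hand side, Proposition \ref{prop:Lambdapa} applied with $\nu=0$ and $a=0$ expresses $\Lambda_1^{(p,0)}\vert U(4)$ precisely as the combination of $D_1$-terms appearing in each formula, while Theorem \ref{theo:main32} guarantees that $[\calH,\vartheta^{(p,0)}]_0\vert U(4)$ plus this correction is a quasi-modular form of weight $2$ on $\Gamma_0(p^2)$, as the corollary asserts in its preamble.

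For the right-hand sides, I would observe that the Eisenstein series $G_2$ is quasi-modular of weight $2$ on $\SL_2(\Z)$, and that the operators $V(d)$ for $d\mid p^2$ together with twists by the quadratic character $\chi_p$ mod $p$ map $\widetilde{M}_2(\SL_2(\Z))$ into $\widetilde{M}_2(\Gamma_0(p^2))$. For part (ii), the elliptic curve with Cremona label 49a1 is modular with associated newform $g_7 \in S_2^{\mathrm{new}}(\Gamma_0(49))$. Hence each claimed right-hand side manifestly lies in $\widetilde{M}_2(\Gamma_0(p^2))$.

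The remaining work is a Sturm-type verification: by the bound already invoked in \cite{BK13} via \cite{Kil}, Theorem 3.13, two quasi-modular forms of fixed weight and level agreeing on all Fourier coefficients up to an explicit finite cutoff must coincide. I would fix a basis of $\widetilde{M}_2(\Gamma_0(p^2))$ consisting of $G_2$, the images $G_2\vert V(d)$ for $d\mid p^2$, the twists $G_2\otimes\chi_p$ and $G_2\otimes\chi_p^2$, and (for $p=7$) the newform $g_7$, and then solve the resulting finite linear system using the explicit Fourier coefficients of the left-hand side, which are computable directly from a table of Hurwitz class numbers $H(n)$ and elementary divisor sums $\sigma_1$, $\lambda_1^{(p,a)}$.

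The principal obstacle is bookkeeping rather than theory: one has to exhibit a workable basis of each space (in particular to decompose $S_2(\Gamma_0(49))$ and recognize its new part as the span of $g_7$), compute sufficiently many Fourier coefficients of the left-hand side, and solve the linear system. The constants $\tfrac12, \tfrac1{12}, -1, \tfrac52$ in (i) and $\tfrac14, -\tfrac1{24}, \tfrac14$ in (ii) then emerge from this system; no ingredient beyond Theorem \ref{theo:main32}, Proposition \ref{prop:Lambdapa}, and a finite coefficient check is required.
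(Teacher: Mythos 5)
Your proposal matches the paper's own (essentially implicit) argument: the paper likewise obtains quasi-modularity of the left-hand side on $\Gamma_0(p^2)$ from \Cref{theo:main32} together with \Cref{prop:Lambdapa}, and then identifies it with the stated combination of Eisenstein series (and, for $p=7$, the newform attached to 49a1) by comparing finitely many Fourier coefficients via the Sturm-type bound of \cite{Kil} already invoked in the discussion of \cite{BK13}. The paper supplies no more detail than you do, so your outline is as complete as the published proof.
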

Note that \Cref{cor:ex} contains the assertions from Corollary 4.3 and Corollary 4.5 of \cite{BK13}. Furthermore, the remaining cases of the class number relations conjectured in \cite{Betal} can now be handled easily by essentially comparing Fourier coefficients. 
\section*{Acknowledgements}
The author's research is supported by the DFG Graduiertenkolleg 1269 ``Global Structures in Geometry and Analysis'' at the Universit\"at zu K\"oln.

The author would like to thank Prof. Dr. Kathrin Bringmann and Prof. Dr. Ken Ono for suggesting this project and Prof. Dr. Sander Zwegers, Dr. Ben Kane, Dr. Larry Rolen and Ren\'e Olivetto for several helpful discussions and comments.

\end{document}